\newtheorem{theorem}{Theorem}[section]
\newtheorem{lemma}[theorem]{Lemma}
\newtheorem{proposition}[theorem]{Proposition}
\newtheorem{corollary}[theorem]{Corollary}
\newcommand{\haussdim}[0]{\mathrm{dim_H} \,}
\newcommand{\prob}[0]{\mathrm{\textbf{P}}}
\newcommand{\expect}[0]{\mathrm{\textbf{E}}}
\newcommand{\indicate}[1]{\mathbf{1} \left \{ #1 \right \}}
\newcommand{\dist}{\operatorname{dist}}
\begin{document}

\title{Hausdorff Dimension of the SLE Curve Intersected with the Real Line}
\author{\begin{tabular}{cc}
Tom Alberts & Scott Sheffield\footnote{Research supported in part by NSF Grants DMS 0403182 and DMS 064558.} \\
\small{E-mail: \texttt{alberts@cims.nyu.edu}} &
\small{E-mail: \texttt{sheff@cims.nyu.edu}} \\
\end{tabular} \\
\\
\small{Courant Institute of Mathematical Sciences} \\
\small{251 Mercer Street} \\
\small{New York, NY 10012} \\
\\}
\date{}

\maketitle

\begin{abstract}
We establish an upper bound on the asymptotic probability of an
$\textrm{SLE}(\kappa)$ curve hitting two small intervals on the real
line as the interval width goes to zero, for the range $4 < \kappa <
8$. As a consequence we are able to prove that the random set of
points in $\mathbb{R}$ hit by the curve has Hausdorff dimension
$2-8/\kappa$, almost surely.
\end{abstract}

\noindent \emph{2000 Mathematics Subject Classification.} 60D05, 60K35, 28A80\\

\noindent \emph{Key words and phrases.} SLE, Hausdorff dimension,
Two-point hitting probability.

\section{Introduction \label{Intro}}

In the seminal paper \cite{rohde_schramm}, Rohde and Schramm were
able to prove that the Hausdorff dimension of an SLE$(\kappa)$ curve
is almost surely less than or equal to $\min(1 + \kappa/8, 2)$. The
scaling properties of SLE immediately imply that the Hausdorff
dimension of the curve must almost surely be a constant, and they
conjectured that their bound was in fact sharp. In general though,
proving a sharp lower bound on the dimension of a random set is a
difficult task. In \cite{lawler:bolyai}, Lawler describes a widely
applicable and commonly used method for doing so. The required
ingredient is a very precise estimate on the probability of two
balls both intersecting the random set. Often this is referred to as
a second moment method since it can be used to get bounds on the
variance of the number of balls (of a certain radius) needed to
cover the set. The second moment estimate is difficult as it has to
precisely describe how the probability decays as the radius of the
balls shrink to zero, and as the balls move closer and farther
apart. In the case of the SLE curve, Beffara was able to establish
the necessary second moment estimates in \cite{beffara:curvedim}.
Lawler \cite{lawler:curvedim} has recently announced a new proof of
the lower bound by using a modified version of the second moment
method that does not explicitly require an estimate on the two-ball
hitting probability.

In this paper we prove a result on the almost sure Hausdorff
dimension of another random set arising from the Schramm-Loewner
Evolution, namely the set of points at which the curve intersects
the real line. Let $\gamma$ be a chordal SLE$(\kappa)$ curve from
zero to infinity in the upper half plane $\mathbb{H}$ of
$\mathbb{C}$. The interaction of this curve with the real line
depends very strongly on the well-known phase transitions of SLE. In
the case $0 \leq \kappa \leq 4$ the curve is almost surely simple
and intersects $\mathbb{R}$ only at zero. For $\kappa \geq 8$ the
curve is space-filling and so $\gamma[0, \infty) \cap \mathbb{R} =
\mathbb{R}$. For the purposes of this paper the most interesting
range is $4 < \kappa < 8$, in which the curve intersects
$\mathbb{R}$ on a random Cantor-like set of Hausdorff dimension less
than 1. The fractal nature of $\gamma[0, \infty) \cap \mathbb{R}$
should not be surprising. When the curve does hit the real line it
tends to linger for a while and hit other real points before
wandering off into the upper half plane again, which gives the set
of hit points enough irregularity to have a fractional dimension.
The main result of this paper is the following:

\begin{theorem}
\label{MainTheorem} For $4 < \kappa < 8$, the Hausdorff dimension of
the set $\gamma[0, \infty) \cap \mathbb{R}$ is almost surely $2 -
8/\kappa$.
\end{theorem}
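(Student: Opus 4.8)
\emph{Proof proposal.} The plan is to obtain matching bounds on $D:=\haussdim(\gamma[0,\infty)\cap\mathbb{R})$, both flowing from estimates for the probability of the event $\he{\epsilon}(x)$ that $\gamma[0,\infty)$ meets the interval $[x,x+\epsilon]$. Put $\alpha:=8/\kappa-1\in(0,1)$, so the target dimension is $1-\alpha=2-8/\kappa$. By scaling invariance of the trace, $\gamma[0,\infty)\cap[2^k,2^{k+1}]$ has the law of $2^k$ times $\gamma[0,\infty)\cap[1,2]$, and the part on $\mathbb{R}_-$ is the mirror image (under $z\mapsto-\bar z$) of the part on $\mathbb{R}_+$; hence $\gamma[0,\infty)\cap\mathbb{R}$ is, apart from the single point $0$, a countable union of rescaled copies of $\gamma[0,\infty)\cap I$ with $I:=[1,2]$, and it suffices to prove $\haussdim(\gamma[0,\infty)\cap I)=2-8/\kappa$ a.s.

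The first ingredient is the one-point estimate $\prob(\he{\epsilon}(x))\asymp\epsilon^{\alpha}$ as $\epsilon\to 0$, uniformly in $x\in I$. This is standard: rewriting the event via the Loewner flow, $\he{\epsilon}(x)$ is comparable to the event that the images of the endpoints of $[x,x+\epsilon]$ under $g_t-W_t$ (a coupled pair of Bessel-type diffusions) stay positive until their separation drops to order $\epsilon$, and a scaling/Girsanov computation yields the boundary one-arm exponent $\alpha$ (for $\kappa=6$ this is the familiar $1/3$). Granting this, the \textbf{upper bound} $D\le 2-8/\kappa$ is the routine first-moment argument: covering $I$ by $\asymp\epsilon^{-1}$ intervals of length $\epsilon$, the expected number meeting $\gamma$ is $\asymp\epsilon^{\alpha-1}=\epsilon^{-(2-8/\kappa)}$, so for $s>2-8/\kappa$ the expected $s$-content of this cover is $\asymp\epsilon^{s-(2-8/\kappa)}\to 0$; taking $\epsilon=2^{-n}$ and applying Borel--Cantelli gives $\mathcal{H}^{s}(\gamma[0,\infty)\cap I)=0$ a.s.

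The technical core is the \emph{two-point estimate}: for $x,y\in I$ with $r:=|x-y|\gg\epsilon$,
\[
\prob\big(\he{\epsilon}(x)\cap\he{\epsilon}(y)\big)\ \lesssim\ \epsilon^{2\alpha}\,r^{-\alpha}
\]
in the limit $\epsilon\to 0$ (for $r\lesssim\epsilon$ one just uses $\prob(\he{\epsilon}(x))\lesssim\epsilon^{\alpha}$). Heuristically: once $\gamma$ first comes within $\epsilon$ of the nearer point one has paid $\epsilon^{\alpha}$; conditioning on the flow up to that time and mapping the slit domain back to $\mathbb{H}$, the far point now sits at conformal distance of order $r$ from the tip, so reaching it costs a further $(\epsilon/r)^{\alpha}$. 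Turning this into a proof is the main obstacle: one must split according to which interval is approached first, control the distortion of the uniformizing maps near the real line (the subtle step, where the derivative and Hölder-continuity estimates of \cite{rohde_schramm} are needed), and bound the resulting two-parameter flow quantities by a supermartingale argument; the restriction $\kappa<8$ enters through $\alpha>0$ and through integrability of $r^{-\alpha}$ near the diagonal.

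The \textbf{lower bound} is then the energy method of \cite{lawler:bolyai}. For $\epsilon_n=2^{-n}$ set $\mu_n(dx):=\prob(\he{\epsilon_n}(x))^{-1}\,\indicate{\he{\epsilon_n}(x)}\,dx$ on $I$, so $\expect[\mu_n(I)]=1$. The one- and two-point estimates give, for every $\beta<2-8/\kappa$ and uniformly in $n$,
\[
\expect\big[\mu_n(I)^2\big]\le C,\qquad \expect\!\left[\iint_{I\times I}\frac{\mu_n(dx)\,\mu_n(dy)}{|x-y|^{\beta}}\right]\le C_\beta,
\]
since $\iint_{I\times I}|x-y|^{-(\alpha+\beta)}\,dx\,dy<\infty$ and the near-diagonal contribution is $O(\epsilon_n^{\,1-\alpha-\beta})\to 0$. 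Passing to a weak subsequential limit $\mu$ of $(\mu_n)$, one checks that $\mu$ is carried by the closed set $\gamma[0,\infty)\cap\mathbb{R}$, that $\mu(I)>0$ with probability at least $1/C$ (Paley--Zygmund, using Fatou for the first and second moments), and that its $\beta$-energy is a.s.\ finite (lower semicontinuity of the energy functional under weak convergence). Hence $\prob(D\ge\beta)\ge 1/C>0$ for all $\beta<2-8/\kappa$, and letting $\beta\uparrow 2-8/\kappa$ gives $\prob(D\ge 2-8/\kappa)>0$. Finally, a zero--one law promotes this to an almost-sure identity: by scaling $\haussdim(\gamma[0,t]\cap\mathbb{R})$ has a $t$-independent distribution while increasing to $D$ as $t\to\infty$, so comparing first moments forces $\haussdim(\gamma[0,t]\cap\mathbb{R})=D$ a.s.\ for every $t$; thus $D$ is measurable with respect to $\bigcap_{t>0}\sigma(W_s:s\le t)$ and is a.s.\ constant by Blumenthal's $0$--$1$ law. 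Combined with the two bounds, that constant is $2-8/\kappa$.
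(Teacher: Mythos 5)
Your overall architecture matches the paper's: reduce by scaling to a fixed interval, obtain a one‑point estimate $\asymp\epsilon^{8/\kappa-1}$, prove a two‑point estimate $\lesssim\epsilon^{2(8/\kappa-1)}|x-y|^{-(8/\kappa-1)}$, run the first‑moment/Borel--Cantelli argument for the upper bound and the Frostman/energy (second moment) method from Lawler's Bolyai notes for the lower bound, and finish with a scaling plus Blumenthal $0$--$1$ law argument to upgrade ``positive probability'' to ``almost surely.'' The paper phrases the second‑moment step in terms of the dyadic events $D^n_k$ and cites Lemmas~1 and~2 of \cite{lawler:bolyai} directly, whereas you write it in the equivalent energy/weak‑limit form; these are the same method. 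Your $0$--$1$ law step (sending $t\to\infty$ instead of scaling the spatial window to $0$) is also a correct variant of the paper's Lemma~\ref{asDimLemma}.

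The genuine gap is the two‑point estimate, which you yourself flag as ``the main obstacle'' and then do not prove. This estimate is the entire content of the paper's Section~\ref{LowerBoundSection} and is the paper's contribution; the remaining steps are, as you correctly say, routine. Your heuristic (pay $\epsilon^{\alpha}$ to reach the near interval, then the far point is at ``conformal distance of order $r$'' so pay another $(\epsilon/r)^{\alpha}$) is exactly the naive picture, and the hard part is ruling out the scenario where the curve comes very close to the far interval on its way to the near one, which destroys the ``conformal distance $\asymp r$'' claim. The paper controls this by conditioning on $\mathcal{F}_{T_y}$, rewriting the conditional hitting ratio as $\asymp\epsilon/d_{T_y}(x)$ via the Koebe $1/4$ and Growth theorems (Lemmas~\ref{Dist2CurveLemma} and~\ref{RatioEstimateLemma}), and then bounding the tail $G(r)=\prob(T_y<T_{y+\epsilon},\,d_{T_y}(x)\le r)$ through an explicit harmonic‑measure decomposition of the hull across the semicircle $|z-x|=r$ (Lemma~\ref{ExpectationUB}) together with a one‑arm estimate $\prob(\tau_r<T_y)\lesssim y^{1-2a}x^{-2a}(x-y)^{4a-2}r$ proved by a separate covering/harmonic‑measure argument (Lemmas~\ref{AllPointHM}--\ref{OnePointHM}). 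Your proposed route via ``derivative and H\"older‑continuity estimates of \cite{rohde_schramm}'' and ``a supermartingale argument'' is not what the paper does, and you give no indication of how such an argument would control the worst case where $d_{T_y}(x)\ll x-y$; without that, \eqref{lowerbound2} is unproved and the lower bound on the dimension does not follow. Everything else in your outline is sound, but this step would need to be filled in with an argument of comparable precision to the paper's Section~\ref{LowerBoundSection}.
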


It is worth noting that the dimension in Theorem \ref{MainTheorem}
is the unique affine function of $1/\kappa$ that interpolates
between the already known dimension values of $0$ for $\kappa \leq
4$, and $1$ for $\kappa \geq 8$. In contrast, the Hausdorff
dimension of the SLE$(\kappa)$ curve itself is an affine function of
$\kappa$ for $0 \leq \kappa \leq 8$.

We will prove Theorem \ref{MainTheorem} using the second moment
method described in \cite{lawler:bolyai}. The asymptotics of certain
hitting probabilities, already well established in a number of
papers (see Section \ref{UpperBoundSection}), give the upper bound
on the dimension. New results of this paper, which establish the
asymptotics of the SLE curve hitting two disjoint small intervals on
the real line, give the lower bound.

An alternative (and independently obtained) proof of Theorem
\ref{MainTheorem} was announced by Schramm and Zhou in
\cite{schramm_zhou:dimension}. Schramm and Zhou do not obtain
explicit bounds on the probability that the path hits two disjoint
intervals (as we do here).  Rather, instead of working with
$\gamma[0, \infty) \cap \mathbb{R}$ directly, they use an explicit
martingale to construct a measure (a so-called {\em Frostman
measure}) on a particular subset of $\gamma[0, \infty) \cap
\mathbb{R}$, which allows them to bound the Hausdorff dimension of
both sets from below.

\subsection{Preliminaries \label{prelim}}

In this paper we work exclusively with the chordal form of Loewner's
equation in the upper half plane. Given a continuous, real-valued
function $t \mapsto U_t, t \geq 0$, the map $g_t(z)$ is defined to
be the unique solution to the initial value problem
\begin{align*}
\partial_t g_t(z) = \frac{2}{g_t(z) - U_t}, \,\, g_0(z) = z.
\end{align*}
An important feature of the maps $g_t$ is that they satisfy the
hydrodynamic normalization at infinity, i.e. $g_t(z) = z + o(1)$ as
$z \to \infty$. Schramm-Loewner Evolution, or more precisely chordal
SLE$(\kappa)$ from 0 to infinity in $\mathbb{H}$, corresponds to the
choice $U_t = \sqrt{\kappa} B_t$, where $B_t$ is a standard
1-dimensional Brownian motion. The results of this paper hold
exclusively for SLE$(\kappa)$, but many of the Lemmas we derive are
deterministic in nature and hold for any continuous driving
function. To emphasize this point and keep the deterministic results
separate from the probabilistic ones we, for these Lemmas, denote
the driving function by $U_t$.

As most of the exponents in this paper usually involve terms in
$1/\kappa$ rather than $\kappa$, we have chosen to use the slightly
different SLE notation that has been championed by Lawler. Instead
of $\kappa$ he uses the parameter $a = 2/\kappa$, and the form of
the Loewner equation defined by
\begin{align}
\partial_t g_t(z) = \frac{a}{g_t(z) - B_t}, \,\, g_0(z) = z.
\label{SLE_def2}
\end{align}
For any $z \in \overline{\mathbb{H}}$ the function $g_t(z)$ is
well-defined up to a random time $T_z$. It is clear from
\eqref{SLE_def2} that $T_z$ is the first time $t$ at which $g_t(z) -
B_t = 0$. Let $K_t = \overline{\{ z \in \mathbb{H} : T_z \leq t \}}$
which is a compact, connected subset of $\overline{\mathbb{H}}$
called the SLE hull. In \cite{rohde_schramm} it was proven that for
all values of $\kappa$ the hull is generated by a curve $\gamma: [0,
\infty) \to \overline{\mathbb{H}}$, i.e. for all $t$, $\mathbb{H}
\backslash K_t$ is the unbounded connected component of $\mathbb{H}
\backslash \gamma([0,t])$. If $1/4 < a < 1/2$ (corresponding to $4 <
\kappa < 8$) then $K_\infty \cap \mathbb{R} = \mathbb{R}$ but
$\gamma[0, \infty) \cap \mathbb{R}$ is a proper subset of
$\mathbb{R}$. The latter fact is evident by observing that
$\gamma[0, \infty) \cap \mathbb{R}$ is determined by the process
$T_x$ for $x \in \mathbb{R}$. If $x > y > 0$ then the curve
intersects $\mathbb{R}$ between $y$ and $x$ iff $T_x > T_y$, and in
the case $1/4 < a < 1/2$ there is always a positive probability of
having $T_x = T_y$. In fact this last probability can be computed
exactly (see \cite[Propositions 6.8 \& 6.34]{lawler:book} for a
detailed discussion), and it is from the asymptotics of this
probability as $x \downarrow y$ that we obtain the upper bound on
the Hausdorff dimension.

Two well known scaling properties of SLE we will use throughout are
that $T_x$ is identical in law to $x^2T_1$, and that if $\gamma$ is
an SLE curve then $\gamma_r(t) := r^{-1} \gamma(r^2t)$ is a curve
identical in law to $\gamma$ (see, e.g., \cite{rohde_schramm}). The
latter, combined with the symmetry of the SLE process about the
imaginary axis, tells us that to compute the Hausdorff dimension of
$\gamma[0, \infty) \cap \mathbb{R}$ it is enough to consider only $
\gamma[0, \infty) \cap [0,1] = \gamma[0, T_1] \cap [0,1]$.

Scaling properties also immediately imply the following.

\begin{lemma}
\label{asDimLemma} The Hausdorff dimension of $\gamma[0,T_1] \cap
[0,1]$ is almost surely a constant.
\end{lemma}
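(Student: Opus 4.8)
The plan is to use a standard ergodicity/tail-triviality argument. The Hausdorff dimension $D := \haussdim (\gamma[0,T_1] \cap [0,1])$ is a random variable measurable with respect to the Brownian motion $B$ driving the SLE. The goal is to show that $D$ is almost surely equal to a deterministic constant, and the natural way is to show $D$ is measurable with respect to a tail-type $\sigma$-algebra on which all events have probability $0$ or $1$. The key input is the scaling relation: for each $r \in (0,1)$, the rescaled curve $\gamma_r(t) = r^{-1}\gamma(r^2 t)$ has the same law as $\gamma$, and moreover $\gamma_r$ is measurable with respect to the ``large-scale'' information $\{B_s - B_{r^2}: s \geq r^2\}$ (up to a suitable time-reparametrization and a shift that does not affect the increments).

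First I would observe that the Hausdorff dimension of a set is unchanged under the bijection $z \mapsto r^{-1} z$, which is a similarity of $\mathbb{C}$, so
\begin{align*}
\haussdim \left( \gamma_r[0, T^{(r)}_1] \cap [0,1] \right) = \haussdim \left( r^{-1} \gamma[0, T_r] \cap [0,1] \right) = \haussdim \left( \gamma[0, T_r] \cap [0, r] \right),
\end{align*}
where $T^{(r)}_1$ denotes the hitting time of $1$ by $g$ run from the rescaled curve. Since $\gamma_r$ has the same law as $\gamma$, the left-hand side has the same distribution as $D$. Next I would note that $\gamma[0,T_r] \cap [0,r] \subseteq \gamma[0,T_1]\cap[0,1]$, and more importantly that countable stability of Hausdorff dimension gives $\haussdim(\gamma[0,T_1]\cap[0,1]) = \sup_n \haussdim(\gamma[0,T_1]\cap[1/n,1])$, while each set $\gamma[0,T_1]\cap[1/n,1]$ can be compared (again via scaling and the Markov property of SLE at the time the curve reaches height of order $1/n$) to a copy of $\gamma[0,T_1]\cap[0,1]$. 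The upshot is that $D$ agrees almost surely with a random variable that is a function only of the increments $\{B_s - B_{r^2} : s \geq r^2\}$ for every fixed $r > 0$, hence is measurable with respect to $\bigcap_{\varepsilon > 0} \sigma\{B_s - B_\varepsilon : s \geq \varepsilon\}$.

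Then I would invoke Blumenthal's zero-one law: this intersection $\sigma$-algebra is contained in the germ $\sigma$-algebra of Brownian motion at time $0$ (after the obvious time-inversion $B_t \leftrightarrow t B_{1/t}$, small-scale behavior of the SLE near $0$ corresponds to the germ field of a Brownian motion at time $0$), which is trivial. Therefore $D$ is constant almost surely.

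The main obstacle is making the measurability claim fully rigorous: one must be careful that the Hausdorff dimension of $\gamma[0,T_1] \cap [0,1]$ really is unaffected by the behavior of $\gamma$ near $0$, i.e.\ that removing a neighborhood of the origin from the hit set does not change the dimension. This requires the countable stability of Hausdorff dimension together with a scaling argument showing $\haussdim(\gamma[0,T_1] \cap [1/n, 1])$ has the same law as $D$ for each $n$ (using that $\gamma$ restricted to times after it first leaves a small disk around $0$ is, up to conformal map, another SLE, and that the part of the real line in $[1/n,1]$ that it hits is a scaled copy of the relevant set). Once that comparison is in place, the zero-one law does the rest.
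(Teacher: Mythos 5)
The high-level strategy---use scaling invariance plus a Brownian zero-one law---is the same as the paper's, but the implementation has a real error in the direction of the argument. You assert that the rescaled curve $\gamma_r(t) = r^{-1}\gamma(r^2 t)$ is measurable with respect to the ``large-scale'' increments $\{B_s - B_{r^2} : s \geq r^2\}$. This is backwards: for $r \in (0,1)$, running $\gamma_r$ up to time $T$ requires $\gamma$ only on $[0, r^2 T]$, i.e.\ $B$ on the \emph{small} time interval $[0, r^2 T]$. So the rescaled quantity lives near $\mathcal{F}_{0+}$, not in any tail $\sigma$-algebra, and the entire tail-triviality scaffolding (the intersection $\bigcap_{\varepsilon > 0}\sigma\{B_s - B_\varepsilon : s \geq \varepsilon\}$, the time-inversion remark) is aimed at the wrong end of the filtration. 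The countable-stability step does not repair this: $\haussdim(\gamma[0,T_1]\cap[1/n,1])$ still depends on the hull accumulated near $0$ through the conformal map, so it is not a function of ``late'' increments, and in fact $D$ itself is not a priori measurable with respect to any nondegenerate sub-$\sigma$-algebra without already knowing it is constant.

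The paper sidesteps exactly this difficulty. Rather than trying to show $D$ is itself measurable with respect to a trivial $\sigma$-algebra, it observes that $A_x := \gamma[0,T_x]\cap[0,x]$ satisfies $A_x \subseteq A_{x'}$ for $x < x'$, so $\haussdim A_x$ is monotone non-increasing as $x \downarrow 0$ and hence converges a.s.\ to some $L$. Each $\haussdim A_x$ is $\mathcal{F}_{T_x}$-measurable with $T_x \to 0$ a.s., so $L$ is $\mathcal{F}_{0+}$-measurable; Blumenthal's zero-one law makes $L$ a constant. By scaling, $\haussdim A_x \stackrel{d}{=} \haussdim A_1$ for every $x$, so $L \stackrel{d}{=} \haussdim A_1$, and a random variable equal in law to a constant is a.s.\ that constant. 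Your proposal has the scaling identity $\haussdim(\gamma[0,T_r]\cap[0,r]) \stackrel{d}{=} D$ right, but misses the monotonicity of $\haussdim A_x$ in $x$, which is the step that produces an honest a.s.\ limit to feed into Blumenthal. With the scaling direction corrected and the monotone-limit argument in place of the Markov/countable-stability sketch, your outline becomes the paper's proof.
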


\begin{proof}
The following argument is by now standard (see
\cite{beffara:sle6dim}, for instance). Let $A_x = \gamma[0, T_x]
\cap [0, x]$. The scaling relations tell us that $A_x$ has the same
law as $xA_1$ for all $x > 0$, and since Hausdorff dimension is
unchanged under linear scaling we have $\haussdim xA_1 = \haussdim
A_1$. Thus $\haussdim A_x$ is equal in law to $\haussdim A$ for all
$x > 0$. Now $\haussdim A_x$ is a decreasing quantity as $x
\downarrow 0$ so it converges almost surely, and its limit has the
same distribution as $\haussdim A_1$ and is
$\mathcal{F}_{0+}$-measurable (the sigma algebra is that of the
Brownian motion). By Blumenthal 0-1 Law the limit must be a
constant. Hence $\haussdim A_1$ is equal in law to a constant and
therefore a constant itself.
\end{proof}

\subsection{Method of Calculating the Hausdorff Dimension
\label{HaussMethod}}

A standard procedure for calculating the Hausdorff dimension of
random subsets of $[0,1]$ is described in \cite{lawler:bolyai}. The
main idea is to finely partition the unit interval and compute
statistics on the number of subintervals that intersect the random
subset. For integer $n \geq 1$ and $1 \leq k \leq 2^n$ define $D_k^n
= \{ T(k2^{-n}) > T((k-1)2^{-n}) \}$, which is the event that the
SLE curve hits in the interval $[(k-1)2^{-n}, k2^{-n}]$. The next
Lemma shows how to prove the upper bound on the Hausdorff dimension.

\begin{lemma}[\cite{lawler:bolyai}, Lemma 1]
\label{UpperBoundLemma} If $s \in (0,1)$ and there exists a $C <
\infty$ such that for all sufficiently large $n$,
\begin{align}
\sum_{k=1}^{2^n} \prob \left( D_k^n \right)  \leq C2^{sn},
\label{upperbound}
\end{align}
then almost surely $\haussdim \gamma[0, T_1] \cap [0,1] \leq s$.
\end{lemma}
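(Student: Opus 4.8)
The plan is to run the standard first-moment argument for Hausdorff-dimension upper bounds, using the events $D_k^n$ to build efficient covers of $\gamma[0,T_1]\cap[0,1]$ at the dyadic scales $2^{-n}$. Write $I_k^n=[(k-1)2^{-n},k2^{-n}]$ and let $N_n=\#\{\,1\le k\le 2^n:D_k^n\text{ occurs}\,\}$ be the (random) number of level-$n$ dyadic intervals that the curve hits. By linearity of expectation, the hypothesis \eqref{upperbound} says precisely that $\expect N_n=\sum_{k=1}^{2^n}\prob(D_k^n)\le C2^{sn}$ for all large $n$.

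First I would verify that the hit intervals, together with a negligible exceptional set, cover the random set. If $x\in\gamma[0,T_1]\cap[0,1]$ is not one of the finitely many dyadic rationals of level $n$, then $x$ lies strictly between the endpoints of a unique $I_k^n$; since the curve hits $x$, the characterization recalled in Section~\ref{prelim}, that the curve meets $\mathbb{R}$ between $(k-1)2^{-n}$ and $k2^{-n}$ iff $T(k2^{-n})>T((k-1)2^{-n})$, forces $D_k^n$ to occur, so $x\in I_k^n$. Hence $\gamma[0,T_1]\cap[0,1]$ is contained in the union of the $N_n$ intervals $I_k^n$ with $D_k^n$ occurring, together with a finite set. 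Each such $I_k^n$ has diameter $2^{-n}$, and the finite exceptional set can be covered by sets of arbitrarily small $t$-content, so for any $t>s$ the $t$-dimensional Hausdorff premeasure at scale $2^{-n}$ satisfies $\mathcal{H}^t_{2^{-n}}(\gamma[0,T_1]\cap[0,1])\le N_n\,2^{-tn}$.

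Taking expectations and invoking \eqref{upperbound} gives $\expect[\mathcal{H}^t_{2^{-n}}(\gamma[0,T_1]\cap[0,1])]\le 2^{-tn}\expect N_n\le C\,2^{-(t-s)n}$, which tends to $0$ as $n\to\infty$ because $t>s$. Since $\mathcal{H}^t_\delta$ is monotone in $\delta$ and $2^{-n}\to 0$, the premeasures $\mathcal{H}^t_{2^{-n}}(\gamma[0,T_1]\cap[0,1])$ increase (in $n$) to the Hausdorff measure $\mathcal{H}^t(\gamma[0,T_1]\cap[0,1])$, so monotone convergence (or Fatou's lemma) yields $\expect[\mathcal{H}^t(\gamma[0,T_1]\cap[0,1])]=0$; thus $\mathcal{H}^t(\gamma[0,T_1]\cap[0,1])=0$ almost surely, whence $\haussdim\gamma[0,T_1]\cap[0,1]\le t$ almost surely. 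Finally, intersecting the countably many full-measure events obtained by taking $t=s+1/m$ for $m\ge 1$ gives $\haussdim\gamma[0,T_1]\cap[0,1]\le s$ almost surely, as claimed.

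The argument is essentially routine; the points that need care are the covering step---correctly matching the event $D_k^n$ to ``the curve hits the interior of $I_k^n$'' and checking that the dyadic-rational exceptional set is negligible for $\mathcal{H}^t$---and the standard but slightly fiddly passage from Hausdorff premeasures to the Hausdorff measure inside the expectation. I do not anticipate any substantive obstacle beyond this bookkeeping.
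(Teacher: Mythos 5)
Your argument is correct and is exactly the standard first-moment covering argument; the paper itself does not reprove this lemma but simply cites it from \cite{lawler:bolyai}, where the proof runs along precisely the lines you describe (cover by the $N_n$ hit dyadic intervals, control the finitely many level-$n$ dyadic endpoints separately, bound $\expect[\mathcal{H}^t_{2^{-n}}]\le C2^{-(t-s)n}$ via Fatou/MCT to get $\mathcal{H}^t=0$ a.s.\ for each $t>s$, then intersect over a countable sequence $t\downarrow s$). One small point worth being explicit about: the identification of $D_k^n$ with ``the curve hits the open interval $((k-1)2^{-n},k2^{-n})$'' is exactly the characterization $T_x>T_y\iff\gamma$ meets $(y,x)$ recalled in Section~\ref{prelim}, and your handling of the endpoint exceptional set (finite at each scale, hence $\mathcal{H}^t$-null) correctly closes the gap where $\gamma$ could touch a dyadic rational without triggering either adjacent $D_k^n$.
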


Showing that the same $s$ is in fact a lower bound is usually a more
difficult task, and it is accomplished by establishing the following
estimates.

\begin{lemma}[\cite{lawler:bolyai}, Lemma 2]
\label{LowerBoundLemma} If $s \in (0,1)$, and there exists $C_1, C_2
\in (0, \infty)$ and $\delta \in (0,1/2)$ such that
\begin{align}
\prob \left( D_k^n \right) \geq C_1 2^{-(1-s)n}, \,\, \mathrm{for}
\,\,\, \delta \leq \frac{k}{2^n} \leq 1 - \delta,
\label{lowerbound1}
\end{align}
and
\begin{align}
\prob \left( D_j^n \cap D_k^n \right) \leq C_2
2^{-(1-s)n}(k-j)^{-(1-s)}, \,\, \mathrm{for} \,\,\, \delta \leq
\frac{j}{2^n} < \frac{k}{2^n} \leq 1-\delta, \label{lowerbound2}
\end{align}
for all $n$ sufficiently large, then there exists a $p = p(s, C_1,
C_2, \delta)
> 0$ such that
\begin{align*}
\textbf{P} \left( \haussdim \left( \gamma[0,T_1] \cap [\delta, 1 -
\delta] \right) \geq s \right) \geq p.
\end{align*}
\end{lemma}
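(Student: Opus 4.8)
The plan is to run the energy (Frostman) method on a sequence of random measures that concentrate on $\gamma[0,T_1]\cap[\delta,1-\delta]$. Fix $s'\in(0,s)$; the argument will produce, with probability bounded below by a constant that does not depend on $s'$, a nonzero measure carried by $\gamma[0,T_1]\cap[\delta,1-\delta]$ of finite $s'$-energy, and then $s'$ is let tend up to $s$. Write $I_k^n=[(k-1)2^{-n},k2^{-n}]$ and define the random measure $\mu_n$ to equal, on each $I_k^n$ with $I_k^n\subseteq[\delta,1-\delta]$, the constant multiple $\indicate{D_k^n}/\prob(D_k^n)$ of Lebesgue measure restricted to $I_k^n$, and zero outside $\bigcup_kI_k^n$; by \eqref{lowerbound1} the denominators are positive, so $\mu_n$ is well defined. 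Because $D_k^n$ forces $\gamma[0,T_1]$ to meet $I_k^n$, the support of $\mu_n$ lies in $[\delta,1-\delta]$ within distance $2^{-n}$ of the compact set $\gamma[0,T_1]$.

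The first moment is immediate: $\expect[\mu_n([\delta,1-\delta])]=2^{-n}\,\#\{k:I_k^n\subseteq[\delta,1-\delta]\}\to1-2\delta$, so it exceeds a constant $c_1>0$ for all large $n$. Expanding the square as a double sum over intervals gives $\expect[\mu_n([\delta,1-\delta])^2]=\sum_{j,k}\prob(D_j^n\cap D_k^n)\,\prob(D_j^n)^{-1}\prob(D_k^n)^{-1}2^{-2n}$, whose diagonal part is at most $C_1^{-1}2^{-sn}\to0$ by \eqref{lowerbound1} and whose off-diagonal part is bounded — using \eqref{lowerbound1}, \eqref{lowerbound2}, and $\sum_{m\le 2^n}m^{-(1-s)}\asymp s^{-1}2^{sn}$ — by a constant $c_2=c_2(s,C_1,C_2)$. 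The Paley--Zygmund inequality then gives $\prob\!\big(\mu_n([\delta,1-\delta])>c_1/2\big)\ge c_1^2/(4c_2)=:q_0>0$ for all large $n$, while Chebyshev's inequality gives $\prob\!\big(\mu_n([\delta,1-\delta])>K\big)\le c_2/K^2$, which we make smaller than $q_0/3$ by fixing $K$ large.

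The heart of the matter is the bound on the expected $s'$-energy $\mathcal{E}_{s'}(\mu_n):=\int\!\!\int|x-y|^{-s'}\,d\mu_n(x)\,d\mu_n(y)$. Writing it as a double sum over intervals and using the elementary estimate $\int_{I_j^n}\!\int_{I_k^n}|x-y|^{-s'}\,dx\,dy\le C\,2^{-n(2-s')}\max(|k-j|,1)^{-s'}$, one finds that the diagonal part is $O(2^{n(s'-s)})\to0$, while the off-diagonal part is, by \eqref{lowerbound1} and \eqref{lowerbound2}, at most a constant times $2^{n(1-s+s')-2n}\sum_{j,k}|k-j|^{-(1-s)-s'}$. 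Since $0<1-s+s'<1$, the inner sum is $\asymp(s-s')^{-1}2^{n(1+s-s')}$, so the powers of $2^n$ cancel \emph{exactly} and $\expect[\mathcal{E}_{s'}(\mu_n)]\le M(s')<\infty$ uniformly in $n$. This cancellation, which relies on the precise exponent $1-s$ appearing in \eqref{lowerbound2}, is the delicate step, and it is also why one must work strictly below $s$: at $s'=s$ the expected energy is only $O(n)$, and the same family of measures cannot be passed to a limit of finite energy. By Markov's inequality, $\prob\!\big(\mathcal{E}_{s'}(\mu_n)>\Lambda\big)\le M(s')/\Lambda<q_0/3$ once $\Lambda$ (depending on $s'$) is fixed large.

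Finally, set $E_n=\{c_1/2<\mu_n([\delta,1-\delta])\le K\}\cap\{\mathcal{E}_{s'}(\mu_n)\le\Lambda\}$; the estimates above give $\prob(E_n)\ge q_0/3=:p$ for all large $n$, hence $\prob(\limsup_nE_n)\ge p$ by Fatou's lemma applied to indicators. On the event $\limsup_nE_n$, choose a subsequence along which $E_n$ holds and, by weak-$*$ compactness of measures on a compact interval with uniformly bounded mass, pass to a further subsequence with $\mu_{n_j}\to\mu$ weakly; then $\mu([\delta,1-\delta])\ge c_1/2>0$, the support of $\mu$ lies in the closed set $\gamma[0,T_1]\cap[\delta,1-\delta]$ (the supports of the $\mu_{n_j}$ shrink onto $\gamma[0,T_1]$), and $\mathcal{E}_{s'}(\mu)\le\liminf_j\mathcal{E}_{s'}(\mu_{n_j})\le\Lambda<\infty$ by lower semicontinuity of $(x,y)\mapsto|x-y|^{-s'}$ under weak convergence of the product measures. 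Frostman's energy criterion then yields $\haussdim\big(\gamma[0,T_1]\cap[\delta,1-\delta]\big)\ge s'$ on $\limsup_nE_n$, so this dimension is at least $s'$ with probability at least $p$, and $p$ does not depend on $s'$. Since $\{\haussdim(\gamma[0,T_1]\cap[\delta,1-\delta])\ge s\}=\bigcap_{m\ge1}\{\haussdim(\gamma[0,T_1]\cap[\delta,1-\delta])\ge s-1/m\}$ is a decreasing intersection, letting $s'\uparrow s$ gives $\prob\big(\haussdim(\gamma[0,T_1]\cap[\delta,1-\delta])\ge s\big)\ge p>0$ with $p=p(s,C_1,C_2,\delta)$, as claimed.
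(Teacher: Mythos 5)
The paper cites this lemma from Lawler's Bolyai notes without reproducing its proof, and the standard proof there is exactly the energy/second--moment argument you give. Your proof is correct and follows that route faithfully: the choice $s'<s$ to make the expected $s'$-energy uniformly bounded (with the exact cancellation of powers of $2^n$ you flag), the Paley--Zygmund and Chebyshev/Markov truncations to produce the event $E_n$ of probability $\geq p$ independent of $s'$, the reverse Fatou step for $\limsup_n E_n$, weak-$*$ compactness and lower semicontinuity of the energy for the limit measure, Frostman's criterion, and the final monotone limit $s'\uparrow s$ -- all of these are the right steps and are carried out correctly.
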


In the present paper we take $s = 2 - 8/\kappa = 2 - 4a$. Section
\ref{UpperBoundSection} summarizes the results that give us
\eqref{upperbound}. Establishing estimates \eqref{lowerbound1} and
\eqref{lowerbound2} is the focus of Section \ref{LowerBoundSection}.
Combined with Lemma \ref{asDimLemma} these three estimates will
prove Theorem \ref{MainTheorem}.

\section{The One-Interval Estimate \label{UpperBoundSection}}

In this section we consider the probability of an SLE curve hitting
a specified interval on the positive real axis. An exact formula
exists and was first proven in \cite{rohde_schramm}. Also see
\cite[Proposition 6.34]{lawler:book} for another proof. We will make
use of a more general version proven in \cite{dubedat:triangle}.

\begin{proposition}[{\cite[Proposition 1]{dubedat:triangle}}]
\label{HittingProb} For chordal SLE($\kappa$) with $4 < \kappa < 8$,
define $F : \mathbb{H} \to T$ to be a Schwarz-Christoffel map from
$\mathbb{H}$ into an isosceles triangle $T$ that sends $0, 1,$ and
$\infty$ to the vertices, with interior angle $(4a-1) \pi$ at the
vertex $F(1)$ and equal angles at the other two vertices (see Figure
\ref{triangle-fig}). Then
\begin{align*}
F(z) = F(0) \prob(T_z < T_1) + F(1) \prob(T_z = T_1) + F(\infty)
\prob(T_z > T_1),
\end{align*}
that is, the three swallowing probabilities are the weights that
make $F(z)$ a convex combination of the three vertices $F(0), F(1),$
and $F(\infty)$.
\end{proposition}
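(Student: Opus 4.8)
The plan is to follow the standard strategy for SLE ``martingale observables'': combine $F$ with the Loewner flow to build a bounded martingale, apply optional stopping, and identify the limit on the three events. For $z\in\mathbb{H}$ and $t<T_z\wedge T_1$ write $Z_t=g_t(z)-B_t\in\mathbb{H}$ and $Y_t=g_t(1)-B_t>0$, and set
\begin{align*}
R_t=\frac{Z_t}{Y_t}\in\mathbb{H},\qquad M_t=F(R_t),
\end{align*}
so that $R_0=z$, $M_0=F(z)$, and $M$ is bounded since $F$ takes values in the bounded triangle $T$. The first step is to check, via It\^o's formula and \eqref{SLE_def2}, that $M$ is a local --- hence, being bounded, a genuine --- martingale on $[0,T_z\wedge T_1)$. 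A direct computation gives
\begin{align*}
dR_t=\frac{(R_t-1)\bigl(R_t(1-a)-a\bigr)}{R_t}\,\frac{dt}{Y_t^2}+(R_t-1)\,\frac{dB_t}{Y_t},
\end{align*}
and since $F$ is holomorphic the drift of $dM_t$ vanishes exactly when $F''/F'$ equals $\frac{2a-2(1-a)R}{R(R-1)}=-\frac{2a}{R}+\frac{4a-2}{R-1}$. This is precisely the logarithmic derivative of the Schwarz--Christoffel integrand $F'(z)=c\,z^{-2a}(z-1)^{4a-2}$, whose exponents are integrable at $0$, $1$ and $\infty$ exactly for $1/4<a<1/2$ and which produces a triangle with interior angle $(4a-1)\pi$ at $F(1)$ and $(1-2a)\pi$ at each of $F(0)$ and $F(\infty)$. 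Hence $M$ is a bounded martingale.

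Next I would apply optional stopping at $\tau:=T_z\wedge T_1$. Since $4<\kappa<8$ we have $T_1<\infty$ almost surely, so $\tau\le T_1<\infty$, the bounded martingale $M$ converges almost surely as $t\uparrow\tau$ to a limit $M_\tau$, and $\expect[M_\tau]=M_0=F(z)$. On $\{T_z<T_1\}$ the point $z$ is swallowed while $1$ is not, so $Z_t\to0$ while $Y_t$ stays bounded away from $0$, giving $R_t\to0$ and, by continuity of $F$ on $\overline{\mathbb{H}}$ (which sends $0,1,\infty$ to the three vertices), $M_\tau=F(0)$. Symmetrically, on $\{T_z>T_1\}$ we get $R_t\to\infty$ and $M_\tau=F(\infty)$. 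Granting that $M_\tau=F(1)$ on $\{T_z=T_1\}$, taking expectations yields
\begin{align*}
F(z)=F(0)\,\prob(T_z<T_1)+F(1)\,\prob(T_z=T_1)+F(\infty)\,\prob(T_z>T_1),
\end{align*}
which is the proposition.

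The main obstacle is the remaining identification: on $\{T_z=T_1\}$ one must show $M_\tau=F(1)$, equivalently $R_t\to1$. Because $M_t=F(R_t)$ converges and $F$ is a homeomorphism of $\overline{\mathbb{H}}$ onto the closed triangle, $R_t$ does converge, to some $\ell\in\overline{\mathbb{H}}\cup\{\infty\}$, and the content is that $\ell=1$. Here I would use the time change $ds=dt/Y_t^2$, under which $\tau$ is sent to $+\infty$ --- this requires $\int^{\tau}dt/Y_t^2=\infty$, which holds because near $T_1$ the real process $Y_t$ behaves like a Bessel process of dimension $2a+1<2$ near its hitting time of $0$, so that $d(Y_t^2)=(2a+1)\,dt-2Y_t\,dB_t$ gives $Y_t^2\asymp T_1-t$ --- so that $R$ becomes an autonomous diffusion on $\mathbb{H}$ whose noise coefficient $R-1$ vanishes only at $R=1$. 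If $\ell\ne1$ then $|R_t-1|$ is bounded away from $0$ near $\tau$, so the local martingale part of $R$ has infinite quadratic variation; passing to its Dubins--Schwarz Brownian motion $\beta$ and using $\beta_J/J\to0$ forces $\ell$ to be a zero of the drift, i.e.\ $\ell=a/(1-a)$, and an Ornstein--Uhlenbeck-type analysis of $R$ near that point (the drift linearizes while the noise stays nondegenerate) shows $R$ cannot converge there; a similar argument, together with the geometric fact that simultaneous swallowing of $z$ and $1$ keeps $|Z_t|$ and $|Y_t|$ comparable, excludes $\ell\in\{0,\infty\}$. Modulo this boundary analysis at the triple point, the proof is a routine It\^o computation followed by optional stopping.
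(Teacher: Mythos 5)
The paper does not actually prove this proposition --- it is cited verbatim from Dub\'edat --- so there is no in-text proof to compare against. Your route is the standard ``martingale observable'' argument, which is also the spirit of Dub\'edat's proof. Your It\^o computation is correct: with $Z_t=g_t(z)-B_t$, $Y_t=g_t(1)-B_t$, $R=Z/Y$, one indeed gets drift $\frac{(R-1)[(1-a)R-a]}{RY^2}\,dt$ and noise $\frac{R-1}{Y}\,dB$, and the drift-cancellation ODE $F''/F'=-2a/R+(4a-2)/(R-1)$ is precisely the logarithmic derivative of $z^{-2a}(z-1)^{4a-2}$, giving the Schwarz--Christoffel map into the isosceles triangle with angle $(4a-1)\pi$ at $F(1)$ and $(1-2a)\pi$ at $F(0),F(\infty)$, integrable exactly for $1/4<a<1/2$. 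Optional stopping and the limit identification on $\{T_z<T_1\}$ and $\{T_z>T_1\}$ are also correct.

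The genuine gap, as you acknowledge, is the identification $M_\tau=F(1)$ on $\{T_z=T_1\}$, and the sketch you give does not close it. The Dubins--Schwarz/LLN step (``forces $\ell$ to be a zero of the drift'') compares orders of the drift integral and the stochastic integral, which is fine when $\ell$ is a finite point at which $b$ is continuous and $|R-1|$ is bounded; it says nothing about $\ell\in\{0,\infty\}$, where $b(R)$ and the noise coefficient $(R-1)$ both blow up at comparable rates (in fact near $\infty$ the time-changed $R$ looks like geometric Brownian motion with positive drift $1/2-a$, so it \emph{is} attracted to $\infty$, and you cannot wave this away). The crux is precisely what you label a ``geometric fact,'' that $|Z_t|\asymp|Y_t|$ on simultaneous swallowing; that is asserted without proof and is, essentially, the whole content of the triple-point identification. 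A cleaner organization of the endgame: since $M_t=F(R_t)$ is a bounded martingale, $\int_0^\tau|F'(R_t)|^2|R_t-1|^2/Y_t^2\,dt<\infty$ a.s.; combined with $\int_0^{T_1}dt/Y_t^2=\infty$ (your Bessel time-change argument), this forces $\sigma(R_t):=F'(R_t)(R_t-1)\propto R_t^{-2a}(R_t-1)^{4a-1}\to0$ on $\{T_z=T_1\}$, whose zeros in $\overline{\mathbb{H}}\cup\{\infty\}$ are exactly $1$ and $\infty$; this disposes of $\ell=0$ and $\ell=a/(1-a)$ at a stroke (so the OU analysis is unnecessary), but $\ell=\infty$ still requires a separate argument (a symmetry $R\leftrightarrow 1/R$ swapping the roles of $z$ and $1$, or a direct geometric estimate). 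Without that last step the proof is incomplete.
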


\begin{figure}
\begin{center}
\includegraphics[scale=1]{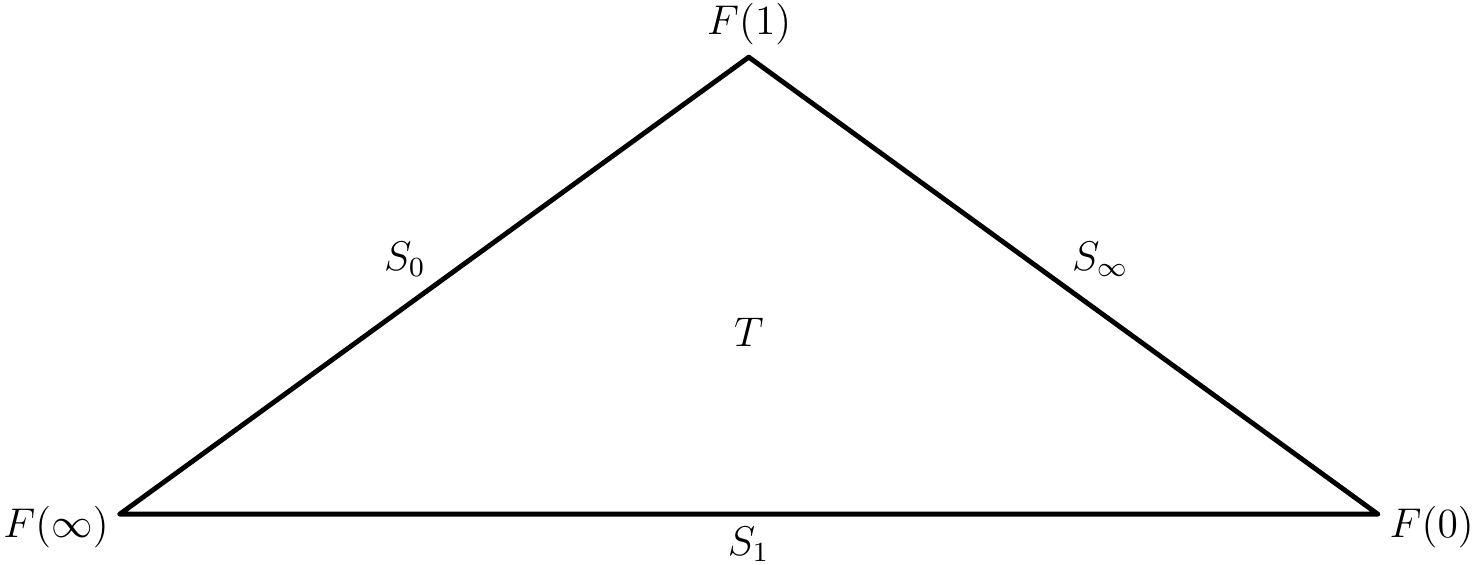}
\caption{An example of the triangle $T$ used in Proposition
\ref{HittingProb}.} \label{triangle-fig}
\end{center}
\end{figure}

The weights used in the above convex combination are commonly called
the \textit{barycentric coordinates} of the point $F(z)$ in the
triangle $T$. Up to translation, scaling, and rotation of the
triangle $T$, the map $F$ is determined by the condition $F'(z)
\propto z^{-2a}(1-z)^{4a-2}$ (here $f(z) \propto g(z)$ means $f(z) =
\zeta g(z)$ for some $\zeta \in \mathbb{C} \backslash \{0\}$). In
subsequent discussion, we will use the choice of $F$ defined by
\begin{align}
F(z) = \frac{\Gamma(2a)}{\Gamma(1-2a) \Gamma(4a-1)} \int_0^{1-z}
\frac{d\xi}{\xi^{2-4a}(1-\xi)^{2a}}. \label{ExplicitF}
\end{align}
This is the choice of $F$ for which no extra scaling or translation
is required to express the hitting probability $\prob \left( T_x <
T_y \right)$, as in the next Proposition. Note that the integral is
single-valued in $\overline{\mathbb{H}}$ with $F(1) = 0$ and $F(0) =
1$ (the integral defining $F(0)$ is a standard beta integral).

We now use Proposition \ref{HittingProb} to establish some further
results that will be useful in later computations. Here and
throughout this paper we will use the notation $f(s) \asymp g(s)$ to
mean there exists constants $0 < C_1 < C_2$ such that $C_1 f(s) \leq
g(s) \leq C_2 g(s)$, for all values of the parameter $s$.

\begin{corollary}
\label{RealLineHittingProb} If $x,y \in \mathbb{R}, x > y > 0$, then
$\prob(T_x > T_y) = F(y/x)$, and consequently
\begin{align}
\prob (T_x > T_y) \asymp \left( \frac{x-y}{x} \right)^{4a-1}.
\label{AsympExpression}
\end{align}
The constants implicit in $\asymp$ depend only on $a$. Moreover if
$\tau$ is any deterministic time or stopping time such that $\tau <
T_y$, then
\begin{align*}
\prob \left( T_x > T_y \mid \mathcal{F}_{\tau} \right) = F \left(
\frac{g_{\tau}(x) - g_{\tau}(y)}{g_{\tau}(x) - B_{\tau}} \right)
\asymp \left( \frac{g_{\tau}(x) - g_{\tau}(y)}{g_{\tau}(x) -
B_{\tau}} \right)^{4a-1}.
\end{align*}
\end{corollary}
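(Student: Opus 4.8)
The plan is to derive the three claims of Corollary~\ref{RealLineHittingProb} directly from Proposition~\ref{HittingProb}, using scaling and the Markov property. First I would establish the unconditioned identity $\prob(T_x > T_y) = F(y/x)$. By the scaling relation $T_x \stackrel{d}{=} x^2 T_1$ (more precisely, the joint scaling $(T_x, T_y) \stackrel{d}{=} x^2(T_1, T_{y/x})$ coming from the curve-scaling property), the event $\{T_x > T_y\}$ has the same probability as $\{T_1 > T_{y/x}\}$, so it suffices to compute $\prob(T_1 > T_w)$ for $w = y/x \in (0,1)$. Proposition~\ref{HittingProb} with $z = w$ real in $(0,1)$ gives that $F(w)$ is the convex combination $F(0)\prob(T_w < T_1) + F(1)\prob(T_w = T_1) + F(\infty)\prob(T_w > T_1)$; since $w \in (0,1)$, the swallowing time $T_w$ is finite and $T_w < T_1$ almost surely (the point $w$ is swallowed strictly before $1$ whenever the curve separates $w$ from $\infty$, and for $w\in(0,1)$ one always has $T_w\le T_1$; the event $T_w=T_1$ and $T_w>T_1$ have probability zero for real $w$ in this range). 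Hence $\prob(T_w < T_1) = 1$ forces... — more carefully, one reads off the barycentric coordinate of $F(w)$ in the direction of the vertex $F(\infty)$ as exactly $\prob(T_w > T_1)$, but here I want $\prob(T_1 > T_w) = \prob(T_w < T_1)$, which is the coordinate toward $F(0)$; with the normalization \eqref{ExplicitF} one has $F(1) = 0$, $F(0) = 1$, and for real $w$ the point $F(w)$ lies on the segment $[F(0), F(1)] = [1,0]$, so the coefficient of $F(0)$ is simply $F(w)$ itself. This yields $\prob(T_1 > T_w) = F(w)$, i.e.\ $\prob(T_x > T_y) = F(y/x)$.

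Next I would prove the asymptotic \eqref{AsympExpression}. From \eqref{ExplicitF}, $F(y/x) = \frac{\Gamma(2a)}{\Gamma(1-2a)\Gamma(4a-1)} \int_0^{(x-y)/x} \xi^{4a-2}(1-\xi)^{-2a}\, d\xi$. As the upper limit $u := (x-y)/x$ ranges over $(0,1)$, the integrand $\xi^{4a-2}(1-\xi)^{-2a}$ is comparable to $\xi^{4a-2}$ for $\xi$ bounded away from $1$, and since $4a - 2 \in (-1, 0)$ for $1/4 < a < 1/2$ the integral converges at $0$; a direct estimate (splitting according to whether $u$ is near $0$ or near $1$, using that $\int_0^u \xi^{4a-2}\,d\xi = u^{4a-1}/(4a-1)$ and that $(1-\xi)^{-2a}$ contributes only a bounded multiplicative factor on $[0, 1/2]$, while near $1$ the integral is bounded above and below by positive constants) gives $F(u\text{'s argument}) \asymp u^{4a-1} = ((x-y)/x)^{4a-1}$ with constants depending only on $a$. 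This is the routine part.

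Finally, the conditional statement follows from the Markov property of SLE together with the first two parts. At a stopping time $\tau < T_y$, the conformal map $g_\tau$ maps $\mathbb{H} \setminus K_\tau$ onto $\mathbb{H}$, sending the (not-yet-swallowed) points $x, y$ to $g_\tau(x) > g_\tau(y) > B_\tau$ on the real line, and the future evolution $g_{\tau + s}\circ g_\tau^{-1}$, recentered, is again an SLE started from $B_\tau$. Thus the conditional probability $\prob(T_x > T_y \mid \mathcal{F}_\tau)$ equals the unconditioned probability that an SLE from the origin swallows the image of $x$ after the image of $y$, which after translating by $-B_\tau$ is $\prob(T_{x'} > T_{y'})$ with $x' = g_\tau(x) - B_\tau > y' = g_\tau(y) - B_\tau > 0$; applying the first part gives $F(y'/x') = F\bigl((g_\tau(x) - g_\tau(y))/(g_\tau(x) - B_\tau)\bigr)$, and applying the second part gives the stated $\asymp$. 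The main obstacle, such as it is, is the careful bookkeeping in the first step: one must check that for real $w \in (0,1)$ the point $F(w)$ really lies on the edge $[F(0), F(1)]$ of the triangle (so that its barycentric coordinate toward $F(\infty)$ vanishes and the identification of $\prob(T_1 > T_w)$ with $F(w)$ is valid under the normalization \eqref{ExplicitF}), and that the boundary behavior $T_w < T_1$ a.s.\ for $w \in (0,1)$ is used consistently; everything else is scaling and elementary integral estimation.
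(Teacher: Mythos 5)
Your proof follows the paper's own proof in all three parts: scaling reduces the identity to reading off the barycentric coordinate of the vertex $F(0)$ from Proposition~\ref{HittingProb} under the normalization \eqref{ExplicitF}; the asymptotic \eqref{AsympExpression} comes from the explicit integral; and the conditional version is the Domain Markov Property. So the approach is the same. There is, however, a genuine error in the first step that you should remove. Your parenthetical asserts that for real $w\in(0,1)$ the event $\{T_w=T_1\}$ has probability zero, and hence $\prob(T_w<T_1)=1$. This is false in the regime $4<\kappa<8$: in fact $\prob(T_w=T_1)=1-F(w)>0$, and this positive probability of simultaneous swallowing is exactly what makes $\gamma[0,\infty)\cap\mathbb{R}$ a proper Cantor-like subset of $\mathbb{R}$ (see the discussion preceding Lemma~\ref{asDimLemma}); had $\prob(T_w<T_1)$ been $1$, the identity you are proving would force $F(w)\equiv 1$, which is absurd. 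Your subsequent ``more carefully'' passage is the right argument and makes the parenthetical unnecessary: since $F$ maps $(0,1)$ onto the side of $T$ joining $F(0)$ to $F(1)$, the coordinate $c_\infty$ vanishes, and with $F(0)=1$, $F(1)=0$ one reads off $c_0=\prob(T_w<T_1)=F(w)$ directly. Keep that and delete the false claim. A smaller, separate point: from $\prob(T_{x'}>T_{y'})=F(y'/x')$ you obtain $F\bigl((g_\tau(y)-B_\tau)/(g_\tau(x)-B_\tau)\bigr)$, not $F\bigl((g_\tau(x)-g_\tau(y))/(g_\tau(x)-B_\tau)\bigr)$ as you (matching the corollary's display) write---the two arguments are $v$ and $1-v$. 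The $\asymp$ that follows is unaffected since $F(v)\asymp(1-v)^{4a-1}$, and the rest of the paper only uses the $\asymp$ form, but you should not equate $F(v)$ with $F(1-v)$.
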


\begin{proof}
The exact expression for $\prob (T_x > T_y) = \prob ( T_1 > T_{y/x}
)$ can be derived from Proposition \ref{HittingProb} by using our
choice of $F$ to compute the barycentric coordinate of the $F(0)$
vertex. For \eqref{AsympExpression}, note that $v := y/x \in (0,1)$
and $F$ is a decreasing function on $[0,1]$ with $F(0) = 1$ and
$F(1) = 0$. Therefore it is enough to show that $F(v) \asymp
(1-v)^{4a-1}$ for $v$ slightly less than 1, which follows easily
from \eqref{ExplicitF}. Combining the exact and approximate
expressions with the Domain Markov Property (that is, mapping back
to the upper half plane via $g_\tau$) proves the last statement.
\end{proof}

We get \eqref{upperbound} as an immediate result of Corollary
\ref{RealLineHittingProb}, since
\begin{align*}
\sum_{k=1}^{2^n} \prob \left( D_k^n \right) &\asymp \sum_{k=1}^{2^n}
\left( \frac{1}{k} \right)^{4a-1} \\
&= 2^{(2-4a)n} \sum_{k=1}^{2^n} \left( \frac{1}{k2^{-n}}
\right)^{4a-1} 2^{-n}.
\end{align*}
The summation term is a Riemann sum for $\int_0^1 u^{1-4a} du$,
which is finite for $1/4 < a <  1/2$. This completes the proof of
the upper bound estimate. The next two results will only be used in
Section \ref{LowerBoundSection} but we mention them here as they are
direct corollaries of Proposition \ref{HittingProb}.

\begin{corollary}
\label{trilinear} There are fixed constants $D_0, D_1,$ and
$D_\infty$, depending only on $a$, for which the three swallowing
probabilities of Proposition \ref{HittingProb} satisfy
\begin{align*}
& \prob \left( T_z < T_1 \right) = D_0 \dist( F(z), S_0 ), \\ &\prob
\left( T_z = T_1 \right) = D_1 \dist( F(z), S_1 ), \\ &\prob \left(
T_z > T_1 \right) = D_{\infty} \dist( F(z), S_\infty ),
\end{align*}
where $S_0, S_1,$ and $S_\infty$ are the lines that form the sides
of $T$, opposite the vertices $F(0), F(1)$, and $F(\infty)$,
respectively.
\end{corollary}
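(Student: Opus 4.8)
The plan is to exploit the linearity of the barycentric coordinate functions on a triangle. By Proposition \ref{HittingProb}, the three swallowing probabilities $p_0(z) := \prob(T_z < T_1)$, $p_1(z) := \prob(T_z = T_1)$, $p_\infty(z) := \prob(T_z > T_1)$ are exactly the barycentric coordinates of the point $w = F(z)$ in the triangle $T$. A standard fact about barycentric coordinates is that each one is an affine function of $w$ that equals $1$ at its own vertex and $0$ on the opposite side; in particular, the barycentric coordinate associated to the vertex $F(0)$ vanishes precisely on the line $S_0$ and is an affine function of $w$, hence proportional to the signed distance from $w$ to $S_0$. Since $F(z) \in T$ always lies on the same side of each edge, the signed distance agrees with $\dist(F(z), S_0)$ up to sign, and the proportionality constant $D_0$ is fixed once the triangle $T$ is fixed (it depends only on the shape of $T$, which depends only on the angle parameter $(4a-1)\pi$, hence only on $a$). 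The same argument applies verbatim to the vertices $F(1)$ and $F(\infty)$, yielding $D_1$ and $D_\infty$.

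Concretely, I would first recall the elementary linear-algebra description: if $T$ has vertices $\mathbf{v}_0, \mathbf{v}_1, \mathbf{v}_\infty$, then any $w$ in the plane of $T$ can be written uniquely as $w = \lambda_0 \mathbf{v}_0 + \lambda_1 \mathbf{v}_1 + \lambda_\infty \mathbf{v}_\infty$ with $\lambda_0 + \lambda_1 + \lambda_\infty = 1$, and each $\lambda_i$ is an affine functional of $w$. Next I would compute $\lambda_0$ explicitly: writing $w - \mathbf{v}_1 = \lambda_0 (\mathbf{v}_0 - \mathbf{v}_1) + \lambda_\infty (\mathbf{v}_\infty - \mathbf{v}_1)$, one solves a $2\times 2$ linear system whose solution expresses $\lambda_0$ as a ratio of (signed) triangle areas, namely $\lambda_0 = \mathrm{Area}(w, \mathbf{v}_1, \mathbf{v}_\infty)/\mathrm{Area}(\mathbf{v}_0, \mathbf{v}_1, \mathbf{v}_\infty)$. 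But $\mathrm{Area}(w, \mathbf{v}_1, \mathbf{v}_\infty) = \tfrac12 |S_0| \cdot \dist(w, S_0)$ where $|S_0|$ is the length of the side through $\mathbf{v}_1$ and $\mathbf{v}_\infty$, so $\lambda_0 = D_0 \dist(w, S_0)$ with $D_0 = |S_0| / (2\,\mathrm{Area}(T))$, a constant determined by $T$ alone. Substituting $w = F(z)$ and $\lambda_0 = p_0(z)$ gives the first displayed identity; the other two are identical.

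I do not anticipate a genuine obstacle here — the result is essentially the definition of barycentric coordinates repackaged — so the only care needed is bookkeeping: one must check that $F(z) \in \overline{T}$ for all $z \in \overline{\mathbb{H}}$ (so the coordinates are nonnegative and the unsigned distance is the right object), which follows because $F$ maps $\mathbb{H}$ conformally into $T$ and extends continuously to $\overline{\mathbb{H}}$; and one must note that the constants $D_0, D_1, D_\infty$ depend only on the fixed shape of $T$, hence only on $a$, since $T$ is isosceles with apex angle $(4a-1)\pi$. If one prefers to avoid any appeal to signed areas, an even shorter route is to observe directly that $z \mapsto p_0(z)$ and $z \mapsto \dist(F(z), S_0)$ are both nonnegative affine functions of $F(z)$ that vanish exactly on $S_0$ and are therefore scalar multiples of each other; evaluating at $z = 0$ (where $p_0 = 1$) pins down the constant $D_0 = 1/\dist(F(0), S_0)$. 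Either way the proof is a few lines.
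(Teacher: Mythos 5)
Your proof is correct and follows essentially the same route as the paper's: both rely on the standard fact that barycentric coordinates are affine in the point and proportional to the distance from the opposite side (the paper calls this the barycentric--trilinear relationship). You simply spell out the area-ratio derivation and the constant $D_0 = 1/\dist(F(0),S_0)$ in more detail than the paper does.
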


\begin{proof}
The statement is an example of the relationship between barycentric
coordinates and \textit{trilinear coordinates}, which describe the
point $F(z)$ using the distances to the three sides of the triangle.
The relationship is clear: the distance from $c_0 F(0) + c_1 F(1) +
c_\infty F(\infty)$ to the line through $F(0)$ and $F(1)$ is a
linear function of $c_\infty$ (and similarly the distances to the
other lines are linear functions of $c_0$ and $c_1$).
\end{proof}

\begin{corollary}
\label{AboveLineHittingProb} For $0 < y < x$, $0 \leq \theta \leq
\pi$, and $r \leq (x-y)/4$,
\begin{align}
\prob \left( T_{x+re^{i\theta}} < T_y \right) \asymp
\frac{y^{1-2a}}{x^{2a}} (x-y)^{4a-2} r \sin \theta.
\end{align}
\end{corollary}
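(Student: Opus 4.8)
The plan is to rescale to the one-interval picture, rewrite the probability via Corollary~\ref{trilinear} as a distance to the side $S_0$, and then estimate that distance by integrating $F'$ along a short vertical segment.

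We may assume $\theta\in(0,\pi)$, since both sides vanish when $\theta\in\{0,\pi\}$ (then $x+re^{i\theta}$ is real and larger than $y$, so the curve cannot reach it before reaching $y$). Put $w:=(x+re^{i\theta})/y\in\mathbb H$. By the scaling invariance of SLE (contracting by the factor $1/y$), $\prob(T_{x+re^{i\theta}}<T_y)=\prob(T_w<T_1)$, which Corollary~\ref{trilinear} turns into $D_0\,\dist(F(w),S_0)$. Since $S_0$ is the side joining the vertices $F(1)$ and $F(\infty)$, i.e.\ the conformal image $F((1,\infty))$ of the real interval $(1,\infty)$, the Corollary reduces to
\begin{align}
\dist(F(w),S_0)\;\asymp\;(\mathrm{Im}\, w)\,\bigl|F'(\mathrm{Re}\, w)\bigr|.\label{key}
\end{align}
Granting \eqref{key}, I would finish by substituting $F'(z)\propto z^{-2a}(1-z)^{4a-2}$ and noting that the hypothesis $r\le(x-y)/4$ forces $\mathrm{Re}\, w\asymp x/y$ and $\mathrm{Re}\, w-1\asymp(x-y)/y$; then $|F'(\mathrm{Re}\, w)|\asymp x^{-2a}y^{2-2a}(x-y)^{4a-2}$, and multiplying by $\mathrm{Im}\, w=(r\sin\theta)/y$ gives exactly $\tfrac{y^{1-2a}}{x^{2a}}(x-y)^{4a-2}r\sin\theta$, as claimed.

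To prove \eqref{key}, write $u:=\mathrm{Re}\, w$, $v:=\mathrm{Im}\, w$, and observe that $r\le(x-y)/4$ makes $u>1$ and $v\le(u-1)/3$, so the (vertical) segment from $u$ to $w=u+iv$ has length $v$, which is at most one third of its distance to each of the branch points $0$ and $1$ of $F'$; in particular $|F'(u+it)|\asymp|F'(u)|$ for $t\in[0,v]$, since $|u+it|\asymp u$ and $|1-u-it|\asymp u-1$ there. Because $F(u)\in S_0$ and $F(w)-F(u)=\int_0^v iF'(u+it)\,dt$, the upper bound in \eqref{key} is then immediate: $\dist(F(w),S_0)\le|F(w)-F(u)|\le\int_0^v|F'(u+it)|\,dt\asymp v\,|F'(u)|$. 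For the lower bound I would use $\dist(F(w),S_0)\ge\dist(F(w),\ell)$, where $\ell$ is the straight line carrying $S_0$; its direction is the value $\alpha:=\arg F'(u)$, which is constant for $u\in(1,\infty)$ (this is why $S_0$ is straight), so that $\dist(F(w),\ell)=\bigl|\int_0^v\mathrm{Im}\bigl(e^{-i\alpha}iF'(u+it)\bigr)\,dt\bigr|$. The key point is that this integral has no cancellation: because $|F'|\propto|z|^{-2a}|1-z|^{4a-2}$ with $2a+|4a-2|=2-2a<\tfrac32$, and because $v\le(u-1)/3$ bounds the swing of each of $\arg(u+it)$ and $\arg(1-u-it)$ by $\arctan(1/3)$, the argument of $F'(u+it)$ never strays more than $(2-2a)\arctan(1/3)<\pi/4$ from $\alpha$. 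Hence $e^{-i\alpha}iF'(u+it)$ lies in the sector $\{\arg\in(\pi/4,3\pi/4)\}$, so $\mathrm{Im}\bigl(e^{-i\alpha}iF'(u+it)\bigr)\asymp|F'(u)|$, and integrating gives $\dist(F(w),\ell)\asymp v\,|F'(u)|$. This proves \eqref{key} with constants depending only on $a$.

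The step I expect to demand the most care is the no-cancellation estimate: one must track the (multivalued) argument of the Schwarz--Christoffel integrand carefully and verify that the hypothesis $r\le(x-y)/4$ genuinely confines $w$ to a cone on which $F'$ both turns by less than a right angle and varies in modulus by only a bounded factor. This is precisely what makes the estimate uniform even as $\mathrm{Re}\, w\downarrow 1$, where $F'$ itself blows up, and as $\mathrm{Re}\, w\to\infty$. The remaining ingredients --- the scaling reduction, the modulus comparison $|F'(u+it)|\asymp|F'(u)|$, and the final substitution into the explicit form of $F'$ --- are routine.
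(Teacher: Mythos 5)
Your proof is correct and, after the common opening reduction (scaling to $w=(x+re^{i\theta})/y$ and invoking Corollary~\ref{trilinear} to pass to $\dist(F(w),S_0)$), it diverges genuinely from the paper's. The paper estimates $\dist(F(z'),S_0)$ indirectly: it applies the Koebe $1/4$ theorem to get $\dist(F(z'),\partial T)\asymp |F'(z')|\,\mathrm{Im}(z')$ and then runs a separate curvature argument on the image of rays $1+te^{i\alpha}$ to show that $S_0$ is the nearest side of $T$, so that $\dist(F(z'),\partial T)=\dist(F(z'),S_0)$. You instead estimate $\dist(F(w),S_0)$ directly by integrating $F'$ along the vertical segment from $u=\mathrm{Re}\,w\in(1,\infty)$ (whose $F$-image lies on $S_0$) up to $w$. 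The upper bound is then trivial; for the lower bound you exploit the fact that $S_0$ is a straight segment with direction $\arg F'(u)$ (constant on $(1,\infty)$), and you establish no cancellation in the integral because the hypothesis $r\le(x-y)/4$ gives $v\le(u-1)/3$, which confines $\arg F'(u+it)$ to within $(2a+|4a-2|)\arctan(1/3)=(2-2a)\arctan(1/3)<\pi/4$ of $\arg F'(u)$ for all $a\in(1/4,1/2)$. I verified this numerically: $(2-2a)\arctan(1/3)<\tfrac32\cdot 0.322<\pi/4$, and the perpendicular component of $iF'(u+it)$ to the line through $S_0$ is then $\ge|F'(u+it)|/\sqrt2$, so the argument closes. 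Your route avoids Koebe and the nearest-side determination entirely and is more self-contained and constant-explicit, at the cost of careful branch-tracking of $\arg F'$; both proofs use $r\le(x-y)/4$ as the geometric hinge (you to bound the argument swing and keep $u>1$, the paper to place $z'$ in a sector based at $1$). A minor remark: $\dist(F(w),S_0)\ge\dist(F(w),\ell)$ is actually an equality in this situation since $F(w)\in T$ projects onto $\ell$ within the side $S_0$, but the inequality alone is all you need.
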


\begin{proof}
Let $z' = (x+re^{i\theta})/y$. By scaling and Corollary
\ref{trilinear},
\begin{align*}
\prob \left( T_{x+re^{i\theta}} < T_y \right) = \prob \left( T_{z'}
< T_1 \right) = D_0 \dist \left( F(z'), S_0 \right).
\end{align*}
A useful tool for estimating a distance to the boundary of a domain
is the Koebe $1/4$ Theorem (see \cite[Corollary 3.19]{lawler:book}),
which states that if $f : D \to D'$ is conformal and $z \in D$ then
\begin{align*}
\frac{\dist \left( f(z), D' \right)}{\dist \left( z, D \right)}
\asymp |f'(z)|,
\end{align*}
where the left and right hand constants implicit in $\asymp$ are
$1/4$ and $4$, respectively. We claim that the conditions $0 < y <
x$ and $r \leq (x-y)/4$ are enough so that $F(z')$ is closest to
side $S_0$ in $T$. Assuming this, it follows that
\begin{align*}
\dist \left( F(z'), S_0 \right) \asymp |F'(z')| \dist \left( z',
\partial \mathbb{H} \right) \propto |z'|^{-2a} |z'-1|^{4a-2}
\textrm{Im}(z').
\end{align*}
Using that $r \leq (x-y)/4$, we have $|z'| \asymp x/y$ and $|z'-1|
\asymp (x/y - 1)$. Clearly $\textrm{Im}(z') = r \sin \theta/y$, from
which the result follows.

\begin{figure}
\begin{center}
\includegraphics[scale=1]{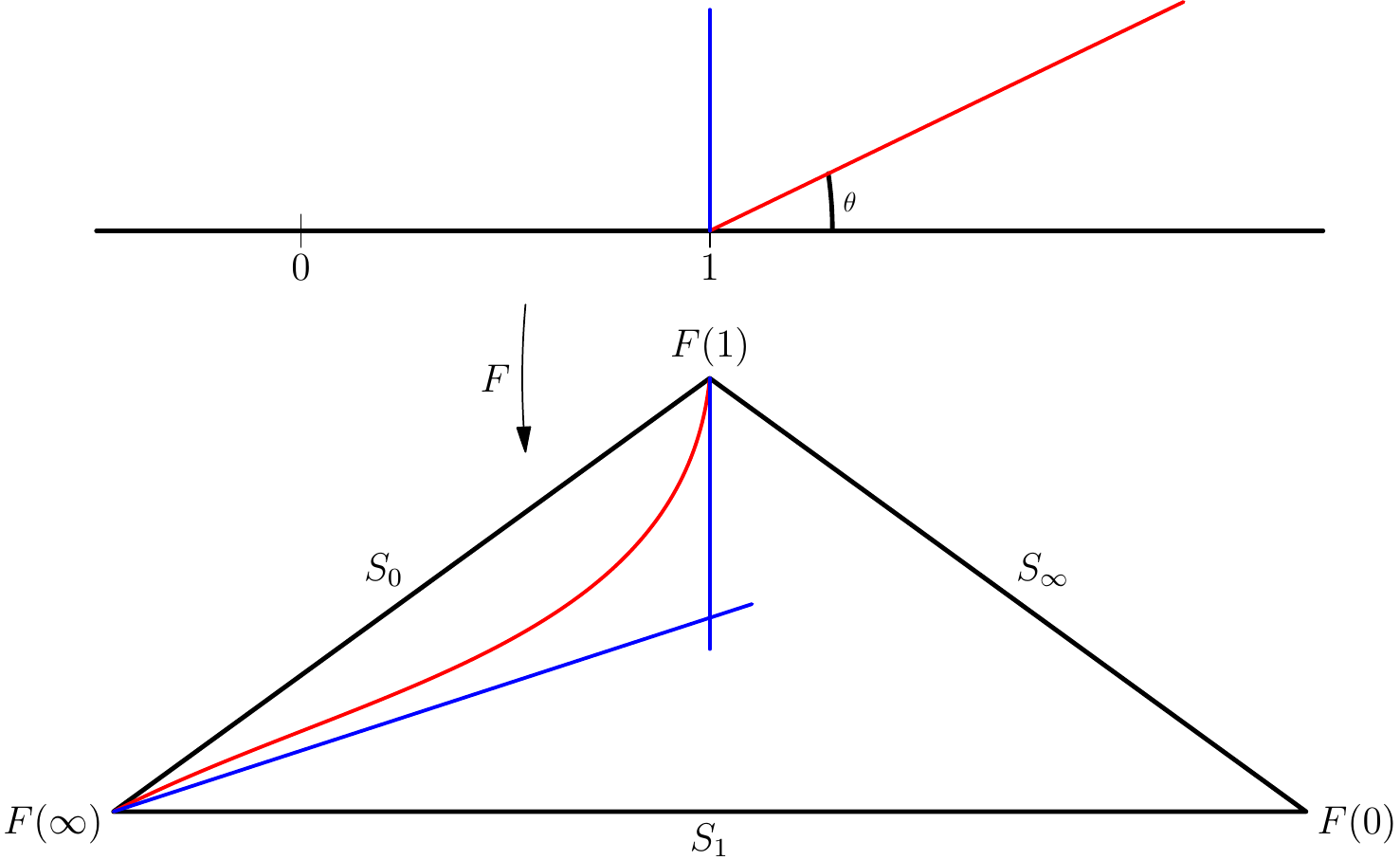}
\caption{The image of the sector $0 \leq \arg(z-1) \leq \theta <
\pi/2$ is, among the three sides of the triangle, always closest to
side $S_0$. This is seen by noting that, in the upper-half plane,
the sector begins on the side of the angle bisector at $F(1)$ that
is closest to $S_0$, and then a curvature argument shows that the
image of the sector must be curving \textit{away} from the angle
bisector. A similar argument shows the curve lies to the left of the
image bisector at $F(\infty)$.} \label{triangle-fig2}
\end{center}
\end{figure}

Now we justify the claim that $F(z')$ is closest to the side $S_0$
in $T$. Let $\alpha \in [0, \pi/2)$. We will show that the curve
$\phi(t) := F(1 + te^{i \alpha})$ lies inside the subtriangle $T'$
bounded by $S_0$ and the two angle bisectors at the vertices $F(1)$
and $F(\infty)$, which proves that it is closest to $S_0$ in $T$. In
the upper half plane the pre-image of the bisector at $F(1)$ is
locally the vertical line from $1$ to $\infty$, and the line $1 +
te^{i\alpha}$ is to the right of this (and closer to the pre-image
of $S_0$, see Figure \ref{triangle-fig2}). Therefore $\phi(t)$ is in
the subtriangle $T'$ for $t$ small at least. But using $F'(z)
\propto z^{-2a}(1-z)^{4a-2}$ it is easy to verify that
\begin{align*}
\partial_t \arg \gamma'(t) = -2a \, \partial_t \arg \left( 1+te^{i\alpha}
\right) \leq 0,
\end{align*}
so that $\phi(t)$ must be curving \textit{away} from the angle
bisector at $F(1)$. Hence $\phi[0, \infty)$ lies on the side of the
bisector closest to $S_0$. A similar argument shows that $\phi[0,
\infty)$ also lies on the side of the angle bisector at $F(\infty)$
that is closest to $S_0$. Since $\textrm{Re}(z') > 1$, we have $z' =
1 + te^{i\alpha}$ for some $t > 0$ and $\alpha \in [0, \pi/2)$,
which proves the claim.
\end{proof}

The constraint $r \leq (x-y)/4$ was not crucial for the above
estimates and certainly could have been improved, but it is all we
will require for later use.

\section{The Two-Interval Estimate \label{LowerBoundSection}}

In this section we work towards establishing the estimates for Lemma
\ref{LowerBoundLemma}. We already get \eqref{lowerbound1} for free
from Corollary \ref{RealLineHittingProb} since
\begin{align*}
\prob \left( D_k^n \right) \asymp k^{1-4a} \geq 2^{(1-4a)n},
\end{align*}
by $k \leq 2^n$. To prove the much more difficult bound
\eqref{lowerbound2} we require an estimate on the SLE curve hitting
two small disjoint intervals. We use various tools from the theory
of conformal mapping to accomplish this.

The case of adjacent intervals, corresponding to $k = j+1$ in
\eqref{lowerbound2}, we will handle directly. In fact in this case
the desired probability can be computed exactly, as the following
Lemma shows.

\begin{lemma}
\label{ExactTwoLemma} Let $0 < x_1 < x_2 < x_3$ be real numbers.
Then
\begin{align*}
\prob \left( T_{x_1} < T_{x_2} < T_{x_3} \right) = \prob \left(
T_{x_1} < T_{x_2} \right) + \prob \left( T_{x_2} < T_{x_3} \right) -
\prob \left( T_{x_1} < T_{x_3} \right).
\end{align*}
\end{lemma}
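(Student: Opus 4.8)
The plan is to reduce everything to events concerning the ordering of the three swallowing times $T_{x_1}, T_{x_2}, T_{x_3}$ and to exploit a simple combinatorial/monotonicity structure: once two of these points have been swallowed, the third is swallowed at the same instant or strictly later, and moreover the ``interval-hitting'' events $\{T_{x_i} < T_{x_j}\}$ for $i<j$ can only fail to be nested in a controlled way. Concretely, I would first record the basic fact — already implicit in the discussion following \eqref{SLE_def2} — that for $0 < a' < b'$ the map $t \mapsto g_t(a') - g_t(b')$ is positive and \emph{monotone} up until $T_{a'}$ (one checks $\partial_t(g_t(a')-g_t(b')) = a\big(\tfrac{1}{g_t(a')-B_t} - \tfrac{1}{g_t(b')-B_t}\big) < 0$, since $g_t(a') - B_t < g_t(b') - B_t$ and both are positive), and that consequently $T_{a'} \le T_{b'}$ always, with the curve hitting strictly between $a'$ and $b'$ precisely on the event $\{T_{a'} < T_{b'}\}$. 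In particular $T_{x_1} \le T_{x_2} \le T_{x_3}$ almost surely, so the left-hand event $\{T_{x_1} < T_{x_2} < T_{x_3}\}$ is just $\{T_{x_1} < T_{x_2}\} \cap \{T_{x_2} < T_{x_3}\}$.

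The second step is an inclusion-exclusion identity at the level of events. Since $T_{x_1} \le T_{x_3}$ always, we have $\{T_{x_1} < T_{x_3}\} = \{T_{x_1} < T_{x_2}\} \cup \{T_{x_2} < T_{x_3}\}$: if the curve separates $x_1$ from $x_3$ it must separate $x_1$ from $x_2$ or $x_2$ from $x_3$ (indeed $T_{x_1} < T_{x_3}$ forces $T_{x_1} < T_{x_2}$ or $T_{x_2} < T_{x_3}$ by the sandwich $T_{x_1}\le T_{x_2}\le T_{x_3}$), and conversely each of the latter events implies $T_{x_1} < T_{x_3}$. Taking probabilities and applying the two-set inclusion-exclusion formula gives
\begin{align*}
\prob(T_{x_1} < T_{x_3}) = \prob(T_{x_1}<T_{x_2}) + \prob(T_{x_2}<T_{x_3}) - \prob\big((T_{x_1}<T_{x_2}) \cap (T_{x_2}<T_{x_3})\big),
\end{align*}
and the intersection on the right is exactly $\{T_{x_1}<T_{x_2}<T_{x_3}\}$ by the first step. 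Rearranging yields the claimed identity.

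I expect the only genuine subtlety — the ``main obstacle,'' though a mild one — to be justifying the almost-sure monotonicity $T_{x_1} \le T_{x_2} \le T_{x_3}$ and the event identity $\{T_{x_1}<T_{x_3}\} = \{T_{x_1}<T_{x_2}\}\cup\{T_{x_2}<T_{x_3}\}$ with full care about boundary behaviour of the Loewner flow (e.g. the meaning of $g_t$ at points already swallowed, and the possibility of $T_{x_i}=T_{x_j}$). This is standard from the chordal Loewner theory and I would cite \cite{lawler:book}; alternatively, one can argue purely topologically: $\gamma[0,t]\cup\mathbb{R}$ disconnects the two points $x_i$, $x_j$ from each other in $\overline{\mathbb{H}}$ exactly when $x_i$ and $x_j$ lie in different complementary components, and the three nested points $x_1<x_2<x_3$ force the obvious set-containments among these separation events. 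Everything else is a one-line application of $\prob(A\cup B) = \prob(A)+\prob(B)-\prob(A\cap B)$, so no computation is required. As a sanity check, the identity is symmetric under reflection and is consistent with Corollary \ref{RealLineHittingProb}: writing $p_{ij} := \prob(T_{x_i}>T_{x_j})$ one recovers $\prob(T_{x_1}<T_{x_2}<T_{x_3}) = p_{12}+p_{23}-p_{13} \ge 0$, which the explicit formula \eqref{ExplicitF} indeed satisfies.
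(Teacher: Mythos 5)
Your proposal is correct and is essentially the paper's proof: the paper's one-line argument is precisely your event identity $\{T_{x_1}<T_{x_3}\}=\{T_{x_1}<T_{x_2}\}\cup\{T_{x_2}<T_{x_3}\}$ (phrased as ``hitting $[x_1,x_3]$ iff hitting $[x_1,x_2]$ or $[x_2,x_3]$'') followed by inclusion-exclusion, and your preliminary step $T_{x_1}\le T_{x_2}\le T_{x_3}$ is the standard monotonicity fact already noted in the paper's preliminaries. One small sign slip: for $0<a'<b'$ the quantity $g_t(a')-g_t(b')$ is negative (since $g_t$ is increasing on the unswallowed part of $\mathbb{R}$) and your derivative formula is in fact positive, not negative; but what the argument actually uses is $g_t(a')-B_t < g_t(b')-B_t$, which gives $T_{a'}\le T_{b'}$ regardless, so the conclusion is unaffected.
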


\begin{proof}
The curve hitting in either interval $[x_1, x_2]$ or $[x_2, x_3]$ is
equivalent to it hitting in $[x_1, x_3]$, from which the result
follows.
\end{proof}
From Lemma \ref{ExactTwoLemma}, the assumption $k2^{-n} > \delta$,
and the approximation in \eqref{AsympExpression}, we have the
existence of a constant $C$ such that
\begin{align*}
\prob \left( D_k^n \cap D_{k+1}^n \right) &\leq C \left( \left(
\frac{2^{-n}}{k2^{-n}} \right)^{4a-1} + \left(
\frac{2^{-n}}{(k+1)2^{-n}} \right)^{4a-1} - \left( \frac{2 \cdot
2^{-n}}{(k+1)2^{-n}} \right)^{4a-1} \right) \\
&\leq \left( \frac{1}{\delta} \right)^{4a-1} (2-2^{4a-1})
2^{-(4a-1)n} \\
&= C_{\delta} 2^{-(4a-1)n}.
\end{align*}
This is exactly \eqref{lowerbound2} for $k-j=1$.

The rest of this section deals with $k-j \geq 2$. It is actually
easier to discuss our proof of \eqref{lowerbound2} if we use a
notation involving continuous variables rather than discrete, so
assume the two intervals are $(y,y+\epsilon)$ and $(x,x+\epsilon)$
with $0 < \delta < y < x < 1 - \delta$ and $\epsilon > 0$.
Implicitly though we mean $x = k2^{-n}, y = j2^{-n},$ and
$\epsilon=2^{-n}$. In this notation, proving \eqref{lowerbound2} is
the same as showing that
\begin{align}
\prob \left( T_y < T_{y+\epsilon}, T_x < T_{x+\epsilon} \right) \leq
C \frac{\epsilon^{2(4a-1)}}{(x-y)^{4a-1}}.
\label{ModifiedLowerBound}
\end{align}
Since we are now assuming that $k-j \geq 2$, we have that $x-y =
(k-j)2^{-n} \geq 2\epsilon$. The bound $\epsilon \leq (x-y)/2$ will
be used later on.

We make a brief note about constants here. In moving from line to
line we do not always explicitly indicate when the constants
involved in a bound may change, usually preferring to fold the new
constants into the generic value $C$. It is important to note that,
in accordance with Lemma \ref{LowerBoundLemma}, any new constants
depend only on $a$ and $\delta$ and never $x, y,$ or $\epsilon$.

For the two-interval hitting probability we already know the
probability of the curve hitting the first interval $(y,
y+\epsilon)$, so we are clearly interested in the conditional
probability of hitting the second interval $(x, x+\epsilon)$ at the
time $y$ is swallowed. Therefore we condition on $\mathcal{F}_{T_y}$
and arrive at
\begin{align}
\prob \left( T_y < T_{y+\epsilon}, T_x < T_{x + \epsilon} \right) &
= \expect \left[ \indicate{T_y < T_{y+\epsilon}} \expect \left[
\indicate{T_x < T_{x+\epsilon}} \mid \mathcal{F}_{T_y} \right]
\right] \notag \\
&\asymp \expect \left[ \indicate{T_y < T_{y+\epsilon}} \left(
\frac{g_{T_y}(x+\epsilon) - g_{T_y}(x)}{g_{T_y}(x+\epsilon) -
B_{T_y}} \right)^{4a-1} \right], \label{SecondMomentLeaveOff}
\end{align}
the last expression being a result of Corollary
\ref{RealLineHittingProb}. This reduces the two-interval hitting
probability to computing a certain moment, but only on the event
$\{T_y < T_{y + \epsilon}\}$ rather than the full space.  Needless
to say this is a complicated calculation. Moreover, it is not a
priori clear how the estimate \eqref{SecondMomentLeaveOff} is
related to the desired bound \eqref{ModifiedLowerBound}. The
following two Lemmas provide the link. We note here that these
Lemmas are deterministic in nature and apply to {\it any} continuous
driving function $U_t$.

\begin{lemma}
\label{Dist2CurveLemma} Suppose that $U_t$ is the driving function
for the Loewner equation. Fix a point $x > 0$, and let $d_t(x) =
\textrm{dist}(x, \partial K_t)$. Define $s_t = \sup K_t \cap
\mathbb{R}$, and let $\eta_t := g_t(s_t+) := \lim_{x \downarrow s_t}
g_t(x)$. Then for $t < T_x$,
\begin{align*}
\frac{g_t(x) - \eta_t}{4 g_t'(x)} \leq d_t(x) \leq 4 \frac{g_t(x) -
\eta_t}{g_t'(x)}.
\end{align*}
In particular, if $T_y < T_x$, then
\begin{align*}
\frac{g_{T_y}(x) - U_{T_y}}{4g'_{T_y}(x)} \leq d_{T_y}(x) \leq
4\frac{g_{T_y}(x) - U_{T_y}}{g_{T_y}'(x)}.
\end{align*}
\end{lemma}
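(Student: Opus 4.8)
The plan is to use the Koebe $1/4$ theorem to relate $d_t(x)$ to $|g_t'(x)|^{-1}$ times the distance from $g_t(x)$ to the boundary of the image domain $g_t(\mathbb{H} \setminus K_t) = \mathbb{H}$. The map $g_t$ is conformal from the unbounded component of $\mathbb{H} \setminus K_t$ onto $\mathbb{H}$, and $x$ lies in the closure of this component (on the real axis) for $t < T_x$. The Koebe estimate as stated in Corollary 3.19 of \cite{lawler:book}, quoted in the proof of Corollary \ref{AboveLineHittingProb}, gives
\begin{align*}
\dist\left( g_t(x), \partial \mathbb{H} \right) \asymp |g_t'(x)| \, \dist\left( x, \partial(\mathbb{H} \setminus K_t) \right),
\end{align*}
with implicit constants $1/4$ and $4$. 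Since $x$ is real and $K_t$ contains a real neighborhood, $\dist(x, \partial(\mathbb{H}\setminus K_t)) = \dist(x, \partial K_t) = d_t(x)$. So the whole content is to show that $\dist(g_t(x), \partial\mathbb{H})$ is comparable to $g_t(x) - \eta_t$, with constants absorbable into the stated $1/4$ and $4$ (or rather, to check the combined constants still come out to $1/4$ and $4$ — but since the lemma only asserts the weaker bounds with $4$, there is slack).

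First I would observe that $\dist(g_t(x), \partial\mathbb{H})$ is exactly $\mathrm{Im}(g_t(x))$ when $g_t(x)$ is not real, but here $x$ is real and $t < T_x$, so $g_t(x)$ is real and positive, and $\dist(g_t(x), \partial\mathbb{H}) = 0$ in $\mathbb{H}$. This is the subtlety: $x$ is a boundary point, not an interior point, so the naive Koebe statement does not literally apply. The right move is to apply Koebe not to $x$ itself but to the point $x$ viewed through the Schwarz reflection, or equivalently to note that $g_t$ extends conformally across the real segment to the right of $s_t$, and on that extended domain a disc of radius $\asymp d_t(x)$ around $x$ maps to a region containing a disc of radius $\asymp |g_t'(x)| d_t(x)$ around $g_t(x)$; the image is contained in $g_t$ of the reflected domain, whose boundary nearest $g_t(x)$ is the image of the tip $s_t$, namely $\eta_t = g_t(s_t+)$ on one side. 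So the key geometric fact is: among all boundary obstructions, the relevant one to the left of $g_t(x)$ is $\eta_t$, and there is nothing between $\eta_t$ and $g_t(x)$. Hence the distance from $g_t(x)$ to the boundary of the reflected image domain is exactly $g_t(x) - \eta_t$ (the nearest real obstruction is at $\eta_t$; obstructions to the right of $g_t(x)$ are irrelevant since $x < s_t$ would contradict $t<T_x$... actually $x>s_t$, so there is no obstruction between $s_t$ and $x$, hence none between $\eta_t$ and $g_t(x)$). Applying Koebe on the reflected (Schwarz-doubled) picture then yields $g_t(x) - \eta_t \asymp |g_t'(x)| d_t(x)$, which rearranges to the claimed two-sided bound.

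The second assertion follows by taking $t = T_y$ with $T_y < T_x$: at this time the hull has just swallowed $y$, so $s_{T_y} \geq y$ and more to the point $\eta_{T_y} = g_{T_y}(s_{T_y}+)$. One then needs $\eta_{T_y} = U_{T_y}$, i.e., that the image of the tip of the real part of the hull at a swallowing time equals the driving point; this is a standard fact (at time $T_y$ the point $y$ and the tip are identified, and $g_{T_y}(y+) = U_{T_y}$ since $T_y$ is precisely the first time $g_t(y) - U_t = 0$ after reflection — more carefully, $\lim_{x\downarrow s_{T_y}} g_{T_y}(x) = U_{T_y}$ because the rightmost swallowed real point maps to the driving term). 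Substituting $\eta_{T_y} = U_{T_y}$ into the general inequality gives the displayed bound.

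I expect the main obstacle to be the boundary-point issue in the first paragraph: making the Koebe application rigorous when $x \in \mathbb{R}$ rather than in the open upper half-plane, which requires either Schwarz reflection of $g_t$ across the real interval $(s_t, \infty)$ (legitimate since $g_t$ is real-analytic there and maps that interval into $\mathbb{R}$) or an approximation argument letting an interior point tend to $x$. The identification $\eta_{T_y} = U_{T_y}$ and the claim that no boundary obstruction lies between $\eta_t$ and $g_t(x)$ are the other points needing a careful sentence, but they are essentially definitional once the reflection picture is set up. Everything else is bookkeeping with the $1/4$ and $4$ constants, which the lemma's generous statement makes painless.
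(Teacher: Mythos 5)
Your proposal is correct and matches the paper's argument: Schwarz-reflect $g_t$ across the real ray to the right of $s_t$ so that the doubled domain $\mathbb{C}\setminus(K_t\cup\tilde K_t)$ (suitably slit to make it simply connected) maps to $\mathbb{C}\setminus(-\infty,\eta_t]$, observe that in this picture $d_t(x)$ is the distance to the doubled hull and $g_t(x)-\eta_t$ is the distance to the slit, apply the Koebe distortion estimate, and finally note $\eta_{T_y}=U_{T_y}$ because the tip of the curve sits on the positive real line at time $T_y$. Your opening paragraph momentarily states the Koebe comparison with $\dist(g_t(x),\partial\mathbb{H})$ and $\dist(x,\partial(\mathbb{H}\setminus K_t))$, both of which vanish for real $x$, but you immediately flag and repair exactly this issue via reflection, so the final argument is the same as the paper's.
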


\begin{proof}
Let $\tilde{K}_t$ be the reflection of the hull $K_t$ across the
real axis. Using the Schwarz reflection principle, the map $g_t$ can
be analytically extended as a map on $\mathbb{C} \backslash (K_t
\cup \tilde{K}_t)$, which we then restrict to $\mathbb{C} \backslash
(K_t \cup \tilde{K}_t \cup (-\infty, 0])$ so the domain is simply
connected. The image of the extended $g_t$ is $\mathbb{C}\backslash
(-\infty, \eta_t]$. Noting that $d_t(x) = \textrm{dist}(x,
\partial(K_t \cup \tilde{K}_t))$ by symmetry, a direct application
of the Koebe 1/4 Theorem gives that
\begin{align*}
\frac{D_t(x)}{4 d_t(x)} \leq g_t'(x) \leq \frac{4D_t(x)}{d_t(x)}
\end{align*}
where $D_t(x) = \textrm{dist}(g_t(x), (-\infty, \eta_t]) = g_t(x) -
\eta_t$. This gives the first statement, and for the special case
one only has to note that $\eta_{T_y} = U_{T_y}$ since the tip of
the SLE curve is on the positive real line at time $T_y$.
\end{proof}

\begin{lemma}
\label{RatioEstimateLemma} Let $U_t, x,$ and $d_t(x)$ be as in Lemma
\ref{Dist2CurveLemma}. Then
\begin{align*}
\frac{g_{T_y}(x+\epsilon) - g_{T_y}(x)}{g_{T_y}(x+\epsilon) -
U_{T_y}} \leq 4 \frac{\epsilon}{d_{T_y}(x)}.
\end{align*}
Moreover, if $d_{T_y}(x) > 4\epsilon$, then
\begin{align*}
\frac{g_{T_y}(x+\epsilon) - g_{T_y}(x)}{g_{T_y}(x+\epsilon) -
U_{T_y}} \asymp \frac{\epsilon}{d_{T_y}(x)}.
\end{align*}
\end{lemma}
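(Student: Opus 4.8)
The plan is to prove the second (two-sided) estimate and deduce the first from it. For the first bound, note that $g_{T_y}(x)-U_{T_y}>0$ (indeed $t\mapsto g_t(x)-U_t$ is strictly positive on $[0,T_x)$, and $T_y<T_x$), so $g_{T_y}(x+\epsilon)-g_{T_y}(x)<g_{T_y}(x+\epsilon)-U_{T_y}$ and the ratio in question is automatically less than $1$. Hence that inequality is immediate whenever $d_{T_y}(x)\le 4\epsilon$, and it remains only to work in the regime $d_{T_y}(x)>4\epsilon$, where I will establish the sharper statement $\frac{g_{T_y}(x+\epsilon)-g_{T_y}(x)}{g_{T_y}(x+\epsilon)-U_{T_y}}\asymp\frac{\epsilon}{d_{T_y}(x)}$ with universal implicit constants; this covers both displays at once.

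Abbreviate $\tau=T_y$, $g=g_\tau$, $U=U_\tau$, $d(\cdot)=d_\tau(\cdot)$, and use the Schwarz-reflected extension of $g$ exactly as in the proof of Lemma~\ref{Dist2CurveLemma}: $g$ is conformal on a slit domain $\Omega'$ with image $\mathbb{C}\setminus(-\infty,U]$ (here I use $\eta_\tau=U$, as in the proof of Lemma~\ref{Dist2CurveLemma}), so that $\dist(g(u),(-\infty,U])=g(u)-U$ for real $u>s_\tau$, while $\dist(u,\partial\Omega')=d(u)$ by symmetry. The one substantive input is that $d(\cdot)$ is $1$-Lipschitz: in the regime $\epsilon<d(x)/4$ this forces $d(u)\in(\tfrac34 d(x),\tfrac54 d(x))$ for every $u\in[x,x+\epsilon]$, so the Euclidean disk $B(x,d(x))\subset\Omega'$ contains $[x,x+\epsilon]$ with room to spare, and the Koebe distortion theorem applied to $g$ on $B(x,d(x))$ gives $g'(u)\asymp g'(x)$ uniformly for $u\in[x,x+\epsilon]$.

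The remainder is bookkeeping with $\asymp$. Integrating, $g(x+\epsilon)-g(x)=\int_x^{x+\epsilon}g'(u)\,du\asymp\epsilon\,g'(x)$. For the denominator, $g(x+\epsilon)-U=(g(x)-U)+(g(x+\epsilon)-g(x))$; since $g'(x)\le 4(g(x)-U)/d(x)$ by the Koebe $\tfrac14$ theorem (equivalently, by Lemma~\ref{Dist2CurveLemma}) and $\epsilon<d(x)/4$, the increment $g(x+\epsilon)-g(x)$ is at most a bounded multiple of $g(x)-U$, so $g(x+\epsilon)-U\asymp g(x)-U$; and $g(x)-U\asymp g'(x)\,d(x)$ is again Lemma~\ref{Dist2CurveLemma}. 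Combining the three comparisons, $\frac{g(x+\epsilon)-g(x)}{g(x+\epsilon)-U}\asymp\frac{\epsilon\,g'(x)}{g'(x)\,d(x)}=\frac{\epsilon}{d(x)}$, with universal constants (the upper one can be taken to be $4$; its exact value is immaterial anyway, as this estimate only feeds into the already $\asymp$-level bound \eqref{SecondMomentLeaveOff}).

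The step I expect to require real care is the control of $d(\cdot)$ along $[x,x+\epsilon]$: a priori $\partial K_\tau$ could fold back over the real axis and approach $x+\epsilon$ much more closely than $x$, which would destroy $g'(u)\asymp g'(x)$ and with it the clean proportionality. The hypothesis $d_{T_y}(x)>4\epsilon$, together with the Lipschitz property of the distance function, is precisely what rules this out, while in the complementary regime the first display is vacuous. One must also keep "$g$" meaning the reflected extension throughout, so that Koebe's theorem is invoked with image domain $\mathbb{C}\setminus(-\infty,U]$ — but that is the device already set up for Lemma~\ref{Dist2CurveLemma}.
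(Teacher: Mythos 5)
Your proof is correct and follows the same overall strategy as the paper's: the ratio is trivially at most $1$ (since $U_{T_y} \le g_{T_y}(x) \le g_{T_y}(x+\epsilon)$), disposing of the regime $d_{T_y}(x) \le 4\epsilon$, and then a Koebe-type distortion estimate on the reflected extension of $g_{T_y}$ handles the complementary regime. The one genuine difference is the distortion tool used to compare $g_{T_y}(x+\epsilon) - g_{T_y}(x)$ with $d_{T_y}(x)\,g'_{T_y}(x)$: the paper applies the Growth Theorem to the renormalized map $\tilde g(z) = \bigl(g_{T_y}(x + d_{T_y}(x) z) - g_{T_y}(x)\bigr)/\bigl(d_{T_y}(x)\,g'_{T_y}(x)\bigr)$ at $z = \epsilon/d_{T_y}(x)$, which bounds the increment in one step, whereas you bound the derivative $g'_{T_y}(u)$ uniformly for $u \in [x, x+\epsilon]$ via the Koebe distortion theorem and then integrate. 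Both are standard corollaries of the Koebe $1/4$ theorem, and both produce a universal two-sided constant, so either works. Two small remarks: (i) the $1$-Lipschitz property of $d_{T_y}(\cdot)$ is not actually needed in your argument --- the distortion theorem on $B(x, d_{T_y}(x))$ only requires $[x, x+\epsilon]$ to lie in the sub-disk of radius $d_{T_y}(x)/4$, which is exactly the hypothesis $\epsilon < d_{T_y}(x)/4$, with no need to control $d_{T_y}(u)$ for $u \neq x$; (ii) your method will not produce the sharp constant $4$ in the first display (the paper obtains it via the algebraic identity $(1-u)^2 + 4u = (1+u)^2$ after applying the Growth Theorem, whereas your distortion-plus-integration route gives a larger universal constant), but, as you correctly observe, the precise value never matters downstream since the lemma feeds into an $\asymp$-level bound.
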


\begin{proof}
Since $U_{T_y} \leq g_{T_y}(x) \leq g_{T_y}(x+\epsilon)$, we have
\begin{align*}
\frac{g_{T_y}(x+\epsilon) - g_{T_y}(x)}{g_{T_y}(x+\epsilon) -
U_{T_y}} \leq 1,
\end{align*}
and hence the claim is trivial if $d_{T_y}(x) \leq 4\epsilon$. In
the case $d_{T_y}(x) > 4\epsilon$ note that
\begin{align}
\frac{g_{T_y}(x+\epsilon) - U_{T_y}}{g_{T_y}(x+\epsilon) -
g_{T_y}(x)} &= 1 + \frac{g_{T_y}(x) - U_{T_y}}{g_{T_y}(x+\epsilon) -
g_{T_y}(x)} \label{RatioEstimateRef}
\end{align}
and by Lemma 3.2,
\begin{align}
\frac{g_{T_y}(x) - U_{T_y}}{g_{T_y}(x+\epsilon) - g_{T_y}(x)} \asymp
\frac{d_{T_y}(x) g_{T_y}'(x)}{g_{T_y}(x+\epsilon) - g_{T_y}(x)},
\label{RatioEstimateRef2}
\end{align}
where that the left and right constants implicit in $\asymp$ are
$1/4$ and $4$, respectively. The last term can be approximated using
the Growth Theorem (see \cite[Theorem 3.23]{lawler:book}), which
says that if $f : \{ |z| < 1 \} \to \mathbb{C}$ with $f(0) = 0$ and
$f'(0) = 1$ then
\begin{align*}
\frac{|z|}{(1+|z|)^2} \leq |f(z)| \leq \frac{|z|}{(1-|z|)^2}.
\end{align*}
The map
\begin{align*}
\tilde{g}_t(z) = \frac{g_t(z_0 + d_t(z_0)z) - g_t(z_0)}{d_t(z_0)
g'_t(z_0)}
\end{align*}
satisfies these conditions, where $g_t$ is extended onto $\mathbb{C}
\backslash (K_t \cup \tilde{K}_t \cup (-\infty, 0])$ as in Lemma
\ref{Dist2CurveLemma}. Setting $z_0 = x, t = T_y, z = \epsilon/
d_{T_y}(x)$, and using the assumption that $4\epsilon < d_{T_y}(x)$
gives
\begin{align*}
\frac{(1 - \epsilon/d_{T_y}(x))^2}{\epsilon/d_{T_y}(x)} \leq
\frac{d_{T_y}(x) g_{T_y}'(x)}{g_{T_y}(x+\epsilon) - g_{T_y}(x)} \leq
\frac{(1 + \epsilon/d_{T_y}(x))^2}{\epsilon/d_{T_y}(x)}.
\end{align*}
Combining this with \eqref{RatioEstimateRef} and
\eqref{RatioEstimateRef2} we have
\begin{align*}
1 + \frac{(1 - \epsilon/d_{T_y}(x))^2}{4\epsilon/d_{T_y}(x)} \leq
\frac{g_{T_y}(x+\epsilon) - U_{T_y}}{g_{T_y}(x+\epsilon) -
g_{T_y}(x)} \leq 1 + 4 \frac{(1 +
\epsilon/d_{T_y}(x))^2}{\epsilon/d_{T_y}(x)},
\end{align*}
or, what is equivalent,
\begin{align*}
\frac{\epsilon/d_{T_y}(x)}{(1+\epsilon/d_{T_y}(x))^2 + 4
\epsilon/d_{T_y}(x)} \leq \frac{g_{T_y}(x + \epsilon) -
g_{T_y}(x)}{g_{T_y}(x+\epsilon) - U_{T_y}} \leq \frac{4 \epsilon /
d_{T_y}(x)}{(1+\epsilon/d_{T_y}(x))^2}.
\end{align*}
Maximizing (minimizing) the denominator of the left (right) hand
side produces
\begin{align*}
\frac{16}{41} \frac{\epsilon}{d_{T_y}(x)} \leq \frac{g_{T_y}(x +
\epsilon) - g_{T_y}(x)}{g_{T_y}(x+\epsilon) - U_{T_y}} \leq 4
\frac{\epsilon}{d_{T_y}(x)}.
\end{align*}
\end{proof}

With Lemma \ref{RatioEstimateLemma} in hand the relation between
\eqref{ModifiedLowerBound} and \eqref{SecondMomentLeaveOff} becomes
more evident. By \eqref{SecondMomentLeaveOff} and Lemma
\ref{RatioEstimateLemma},
\begin{align}
\prob \left( T_y < T_{y+\epsilon}, T_x < T_{x+\epsilon} \right) \leq
C \epsilon^{4a-1} \expect \left[ \indicate{T_y < T_{y+\epsilon}}
d_{T_y}(x)^{1-4a} \right]. \label{UpperBoundOne}
\end{align}
On the event $\{ T_y < T_{y+\epsilon} \}$ it is important to note
that $d_{T_y}(x)$ satisfies $0 \leq d_{T_y}(x) \leq x-y$. The upper
bound comes from the simple observation that $\gamma(T_y)$ lies
somewhere on the real line to the right of $y$. In fact, on $\{ T_y
< T_{y+\epsilon} \}$ it is even true that $\gamma(T_y) \in [y,
y+\epsilon]$. The latter suggests that $d_{T_y}(x)$ should not be
much less than $x-y$ either, since otherwise the SLE curve would
have to touch somewhere on the real line before $y$, and then make
an excursion in the upper half-plane that gets very close to $x$ but
then returns all the way back to the interval $[y,y+\epsilon]$. One
expects such excursions to be rare. \textit{If} it is true that
$d_{T_y}(x)$ is roughly on the order of $x-y$, then
\eqref{UpperBoundOne} gives
\begin{align*}
\prob \left( T_y < T_{y+\epsilon}, T_x < T_{x+\epsilon} \right)
&\leq C \prob \left( T_y < T_{y+\epsilon} \right)  \epsilon^{4a-1}
(x-y)^{1-4a} \\
&\leq C \left( \frac{\epsilon}{y+\epsilon} \right)^{4a-1}
\epsilon^{4a-1} (x-y)^{1-4a} \\
&\leq C_{\delta} \epsilon^{2(4a-1)}(x-y)^{1-4a},
\end{align*}
where the last inequality uses $y > \delta$. This is exactly
\eqref{ModifiedLowerBound}. The rest of the paper proceeds with this
line of attack in mind, and the crux of the remaining argument is
showing that $d_{T_y}(x)$ is rarely small on the event $\{ T_y <
T_{y+\epsilon} \}$.

Consider the distribution function
\begin{align*}
G(r) = \prob \left( T_y < T_{y+\epsilon}, d_{T_y}(x) \leq r \right).
\end{align*}
We use $G$ to write the expectation in \eqref{UpperBoundOne} as
\begin{align}
\expect \left[ \indicate{T_y < T_{y+\epsilon}} d_{T_y}(x)^{1-4a}
\right]
&= \int_0^{x-y} r^{1-4a} dG(r) \notag \\
&= \int_0^{x-y} \int_r^{\infty} (4a-1)
v^{-4a} dv \, dG(r) \notag \\
&= \int_0^{x-y} (4a-1) v^{-4a} G(v) dv + \int_{x-y}^{\infty} (4a-1)
v^{-4a} G(x-y) dv, \label{GeometricSum}
\end{align}
the last equality being an application of Fubini's Theorem. Consider
the second integral first. For it we have
\begin{align*}
G(x-y) = \prob \left( T_y < T_{y+\epsilon} \right) \asymp \left(
\frac{\epsilon}{y+\epsilon} \right)^{4a-1} \leq C_{\delta}
\epsilon^{4a-1},
\end{align*}
and again the last inequality uses $y > \delta$. Consequently
\begin{align}
\int_{x-y}^{\infty} (4a-1) v^{-4a} G(x-y) dv \leq C
\frac{\epsilon^{4a-1}}{(x-y)^{4a-1}} \label{IntegralTwoBound}
\end{align}
for some constant $C$ depending only on $a$ and $\delta$.

We need the same upper bound for the first integral in
\eqref{GeometricSum}, which requires an upper bound on $G(r)$. By
definition, $G(r)$ is the probability of an SLE curve coming within
a specified distance $r$ of the point $x$ \textit{before} continuing
on to hit the interval $(y,y+\epsilon)$. To estimate $G(r)$ our
strategy will be to decompose any such curve into the path from zero
to where it first hits the semi-circle of radius $r$ centered at
$x$, and then from the semi-circle to the interval $(y, y+\epsilon)$
(see Figure \ref{hull-fig}). The probability of the curve hitting
the semi-circle (before swallowing $y$) will be estimated directly,
and the probability of the curve going from the semi-circle to $(y,
y+\epsilon)$ will be estimated using the conformal invariance
property and some considerations of harmonic measure.

We split the first integral in \eqref{GeometricSum} into two parts:
\begin{align}
\int_0^{x-y} (4a-1)v^{-4a} G(v) dv = \int_0^{\frac{x-y}{4}}
(4a-1)v^{-4a} G(v) dv + \int_{\frac{x-y}{4}}^{x-y} (4a-1)v^{-4a}G(v)
dv. \label{TwoPartFirst}
\end{align}
Using that $G(r)$ is an increasing function of $r$,
\begin{align}
\int_{\frac{x-y}{4}}^{x-y} (4a-1)v^{-4a} G(v) dv &\leq
\int_{\frac{x-y}{4}}^{x-y} (4a-1)\left( \frac{x-y}{4} \right)^{-4a}
G(x-y) dv \notag \\
&\leq C \frac{\epsilon^{4a-1}}{(x-y)^{4a-1}},
\label{IntegralOnePBound}
\end{align}
which is the same upper bound in \eqref{IntegralTwoBound}. For the
integral from zero to $(x-y)/4$ we therefore only need an upper
bound on $G(r)$ for $r$ small, namely $r \leq (x-y)/4$. Again the
condition $r \leq (x-y)/4$ is arbitrary, but it is all we will
require later on.

Now we show how to estimate the probability of the SLE curve going
from the semi-circle to the interval $(y,y+\epsilon)$. Define the
stopping time $\tau_r = \inf \{t \geq 0 : |\gamma(t) - x| = r \}$.
The event $\{d_{T_y}(x) \leq r \}$ is the same as the event $\left\{
\tau_r < T_y \right \}$, and both are clearly
$\mathcal{F}_{\tau_r}$-measurable. We condition on
$\mathcal{F}_{\tau_r}$ to compute the probability of the curve going
from the semi-circle to $(y,y+\epsilon)$, so that
\begin{align}
G(r) = \prob \left( T_y < T_{y+\epsilon}, d_{T_y}(x) \leq r \right)
&\asymp \expect \left[ \indicate{d_{T_y}(x) \leq r} \left(
\frac{g_{\tau_r}(y+\epsilon) - g_{\tau_r}(y)}{g_{\tau_r}(y+\epsilon)
- B_{\tau_r}} \right)^{4a-1} \right] \label{HittingDistance}.
\end{align}

The following lemma gives an upper bound on \eqref{HittingDistance}.
Again we should note that the lemma is essentially deterministic in
nature and holds for any continuous driving function $U_t$.

\begin{lemma}
\label{ExpectationUB} Suppose $\tau_r < T_y$. Then there exists a
constant $C
> 0$, depending only on $a$ and $\delta$, such that
\begin{align}
\frac{g_{\tau_r}(y+\epsilon) - g_{\tau_r}(y)}{g_{\tau_r}(y+\epsilon)
- U_{\tau_r}} \leq C \frac{\epsilon r}{(x-y)^2}.
\label{DeterministicBound}
\end{align}
\end{lemma}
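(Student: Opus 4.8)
The plan is to estimate the ratio on the left of \eqref{DeterministicBound} by the same two-step strategy used in Lemmas \ref{Dist2CurveLemma} and \ref{RatioEstimateLemma}: first control the numerator $g_{\tau_r}(y+\epsilon) - g_{\tau_r}(y)$ by a derivative times $\epsilon$ via the Growth/Koebe estimates, and then control the denominator $g_{\tau_r}(y+\epsilon) - U_{\tau_r}$ from below in terms of the geometry of the hull $K_{\tau_r}$. The key geometric input is that on the event $\tau_r < T_y$ the point $y$ (hence $y+\epsilon$) has not yet been swallowed, so $g_{\tau_r}$ is conformal on a neighborhood of $[y, y+\epsilon]$ in $\mathbb{C}\setminus(K_{\tau_r}\cup\tilde K_{\tau_r})$, and the distance from $y$ to $\partial K_{\tau_r}$ is comparable to $x-y$ from above — indeed at most $x-y$ trivially, but more importantly the hull at time $\tau_r$ contains a point on the circle $|z-x|=r$, so it comes within $r$ of $x$; combined with the fact that the tip has not reached $[y,y+\epsilon]$, one gets that $\dist(y,\partial K_{\tau_r})$ and $\dist(y+\epsilon,\partial K_{\tau_r})$ are both at most of order $x-y$ (they could be smaller, but that only helps, as we will see, since a smaller distance makes the numerator smaller).

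First I would apply Lemma \ref{Dist2CurveLemma} — or rather the Koebe $1/4$ argument in its proof, now with the base point $y+\epsilon$ in place of $x$ — to the Schwarz-reflected extension of $g_{\tau_r}$ on $\mathbb{C}\setminus(K_{\tau_r}\cup\tilde K_{\tau_r}\cup(-\infty,0])$, whose image is $\mathbb{C}\setminus(-\infty,\eta_{\tau_r}]$. This gives
\begin{align*}
g_{\tau_r}(y+\epsilon) - \eta_{\tau_r} \asymp d_{\tau_r}(y+\epsilon)\, g_{\tau_r}'(y+\epsilon),
\end{align*}
and since $\eta_{\tau_r} \geq U_{\tau_r}$ (the tip is at or to the right of the extreme real point of the hull, and at time $\tau_r < T_y$ it sits on the circle around $x$, hence to the right of $y+\epsilon$ is \emph{not} forced — one must check $\eta_{\tau_r}\ge U_{\tau_r}$ carefully, but $g_{\tau_r}(y+\epsilon)-U_{\tau_r}\ge g_{\tau_r}(y+\epsilon)-\eta_{\tau_r}$ is the inequality direction we want for a \emph{lower} bound on the denominator only if $\eta_{\tau_r}\ge U_{\tau_r}$; if instead $U_{\tau_r}$ is larger one bounds below directly using $g_{\tau_r}(y+\epsilon)-U_{\tau_r}\ge c\,(g_{\tau_r}(y+\epsilon)-g_{\tau_r}(y))$ is false, so the correct move is to keep $U_{\tau_r}$ and note the tip is near $x$, giving $g_{\tau_r}(y+\epsilon)-U_{\tau_r}\asymp d_{\tau_r}(y+\epsilon)g_{\tau_r}'(y+\epsilon)$ outright by Lemma \ref{Dist2CurveLemma}'s second display applied at $y+\epsilon$). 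Next, for the numerator, I would use the Growth Theorem exactly as in the proof of Lemma \ref{RatioEstimateLemma}, applied to the normalized map $\tilde g_{\tau_r}(z) = (g_{\tau_r}(y+\epsilon + d\, z) - g_{\tau_r}(y+\epsilon))/(d\,g_{\tau_r}'(y+\epsilon))$ with $d = d_{\tau_r}(y+\epsilon)$ and $z = -\epsilon/d$ (legitimate since $\epsilon \le (x-y)/2$ and $d\le x-y$ force $|z|$ bounded, though possibly not small — here one may need the Koebe distortion theorem rather than the Growth Theorem if $|z|$ is of order $1$), to obtain $g_{\tau_r}(y+\epsilon) - g_{\tau_r}(y) \asymp \epsilon\, g_{\tau_r}'(y+\epsilon)$ as long as $\epsilon \lesssim d_{\tau_r}(y+\epsilon)$; if $\epsilon \gtrsim d_{\tau_r}(y+\epsilon)$ one bounds the numerator crudely by $g_{\tau_r}(y+\epsilon)-\eta_{\tau_r}\asymp d\,g'$ anyway. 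Dividing numerator by denominator, the $g_{\tau_r}'(y+\epsilon)$ cancels and yields a bound of order $\epsilon / d_{\tau_r}(y+\epsilon)$, or order $1$, whichever is relevant.

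To finish I would convert $\epsilon/d_{\tau_r}(y+\epsilon)$ into the claimed $\epsilon r/(x-y)^2$. This is the step that carries the real content: I must show $d_{\tau_r}(y+\epsilon) \gtrsim (x-y)^2/r$. The idea is that on $\{\tau_r < T_y\}$ the hull $K_{\tau_r}$ is a connected set containing $0$ and reaching the circle $|z-x|=r$, but not separating $[y,y+\epsilon]$ from $\infty$; its intersection with the real line is an interval $[0, s_{\tau_r}]$ with $s_{\tau_r} < y$. A Beurling-type / harmonic-measure estimate, or more elementarily the explicit conformal map, shows that a connected hull whose real trace ends at $s<y$ and which pokes out to within $r$ of $x$ must still leave $y+\epsilon$ at distance comparable to $(x-y)\cdot\dist(x\text{-neighborhood geometry})$; quantitatively, because $g_{\tau_r}$ sends $\partial K_{\tau_r}$ near the tip to a point within $O(r\,g')$ of $\eta_{\tau_r}$ while it sends $y+\epsilon$ to distance $\asymp (x-y)g'$-ish, the ratio $d_{\tau_r}(y+\epsilon)/(x-y)$ is at least of order $(x-y)/r$. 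I expect this last estimate — establishing $d_{\tau_r}(y+\epsilon)\gtrsim (x-y)^2/r$ from the connectedness of the hull and the stopping-time definition of $\tau_r$, likely via the Koebe $1/4$ theorem applied simultaneously at the scale $r$ near $x$ and the scale $x-y$ near $y$, or via an explicit harmonic measure comparison — to be the main obstacle; the derivative-cancellation bookkeeping in the first two steps is routine given Lemmas \ref{Dist2CurveLemma} and \ref{RatioEstimateLemma}.
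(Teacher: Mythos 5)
Your proposal has a genuine gap, and I think the approach cannot be repaired in the form you outline. Two problems, the second fatal.

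First, the derivative-cancellation bookkeeping is not routine here, for exactly the reason you half-flag and then try to wave away. At time $\tau_r$ the tip $\gamma(\tau_r)$ is on the semi-circle, \emph{not} on the real line, so $\eta_{\tau_r}\ne U_{\tau_r}$; in fact $\eta_{\tau_r}\le U_{\tau_r}$, and the gap $U_{\tau_r}-\eta_{\tau_r}$ (the image under $g_{\tau_r}$ of the right side of the hull) can be large. Lemma \ref{Dist2CurveLemma} gives $g_{\tau_r}(y+\epsilon)-\eta_{\tau_r}\asymp d_{\tau_r}(y+\epsilon)\,g_{\tau_r}'(y+\epsilon)$, but that is an estimate on $g_{\tau_r}(y+\epsilon)-\eta_{\tau_r}$, not on the denominator $g_{\tau_r}(y+\epsilon)-U_{\tau_r}$, and the inequality goes the wrong way: $g_{\tau_r}(y+\epsilon)-U_{\tau_r}\le g_{\tau_r}(y+\epsilon)-\eta_{\tau_r}$, which is an upper bound on the denominator, not the lower bound you need. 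The ``second display'' of Lemma \ref{Dist2CurveLemma} that you invoke applies only at times when the tip is on $\mathbb{R}$, and $\tau_r$ is not such a time.

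Second, the inequality you isolate as ``the main obstacle,'' $d_{\tau_r}(y+\epsilon)\gtrsim (x-y)^2/r$, is simply false. Take a hull that tracks the real axis from $0$ out to $y-\epsilon$, then rises vertically to height $\asymp x-y$ and bends over to come within $r$ of $x$. At time $\tau_r$ one has $d_{\tau_r}(y+\epsilon)\asymp\epsilon$, and the claimed lower bound would force $r\gtrsim(x-y)^2/\epsilon\gg x-y$, contradicting $r\le(x-y)/4$. So the conversion from $\epsilon/d_{\tau_r}(y+\epsilon)$ to $\epsilon r/(x-y)^2$ cannot go through pointwise. (The actual bound in the lemma does hold in this scenario, but for a different reason: the denominator also becomes small, by an amount that cannot be read off from $d_{\tau_r}(y+\epsilon)$ alone.) The paper avoids all of this by rewriting the ratio as a limit of ratios of harmonic measures from $iL$, conformally transferring to $\mathbb{H}\setminus K_{\tau_r}$, and then decomposing the Brownian path at its first hit of the left arc $A_{L,r}$ of the semi-circle $|z-x|=r$; conditioned on landing at $z\in A_{L,r}$, the numerator event has probability $\lesssim\epsilon\,\mathrm{Im}(z)/(x-y)^2$ (a plain half-plane estimate) while the denominator event has probability $\gtrsim\mathrm{Im}(z)/r$ (by comparison with a strip, after removing the right arc). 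Dividing kills the $\mathrm{Im}(z)$ and gives $\epsilon r/(x-y)^2$ directly, with no appeal to $d_{\tau_r}(y+\epsilon)$ and no issue with $\eta_{\tau_r}$ versus $U_{\tau_r}$.
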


The proof first gives a way of exactly computing the left hand side
of \eqref{DeterministicBound} using the harmonic measure of certain
boundary segments of the hull $\mathbb{H} \backslash K_{\tau_r}$,
and then the upper bound is arrived at by estimating the harmonic
measure terms. Throughout the rest of the paper we let $\beta$
denote a standard complex Brownian motion (independent of the
driving function for the Loewner equation), and for $z \in
\mathbb{C}$ let $\prob_z$ and $\expect_z$ denote probabilities and
expectations for Brownian motion assuming $\beta_0 = z$. Moreover,
given a domain $D \subset \mathbb{C}$ we define $\tau_D = \inf \{ t
\geq 0 : \beta_t \not \in D \}$.

\begin{figure}
\begin{center}
\includegraphics[scale=1]{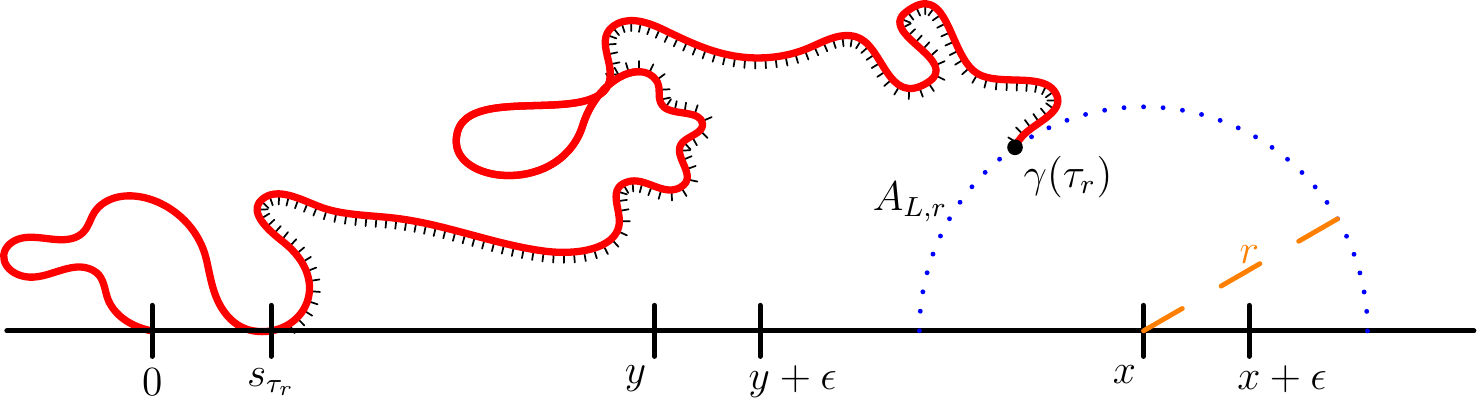}
\caption{The SLE hull at time $\tau_r$. The right hand side of the
hull is highlighted with tick marks.} \label{hull-fig}
\end{center}
\end{figure}

\begin{proof}[Proof of Lemma \ref{ExpectationUB}]
Let $x_1 < x_2$ be real numbers. If $L > 0$, then in the upper
half-plane
\begin{align*}
\prob_{iL} \left( \beta(\tau_{\mathbb{H}}) \in [x_1, x_2] \right) &=
\int_{x_1}^{x_2} \frac{L}{\pi
(x^2 + L^2)} dx \\
&=\frac{x_2 - x_1}{\pi L} +O(L^{-2}),
\end{align*}
which implies
\begin{align*}
x_2 - x_1 = \lim_{L \uparrow \infty} \pi L \cdot \prob_{iL} \left(
\beta(\tau_{\mathbb{H}}) \in [x_1, x_2] \right).
\end{align*}
Consequently,
\begin{align}
\frac{g_{\tau_r}(y+\epsilon) - g_{\tau_r}(y)}{g_{\tau_r}(y+\epsilon)
- U_{\tau_r}} &= \lim_{L \uparrow \infty} \frac{\prob_{iL} \left(
\beta(\tau_\mathbb{H}) \in [g_{\tau_r}(y), g_{\tau_r}(y+\epsilon)]
\right)}{\prob_{iL} \left( \beta(\tau_\mathbb{H}) \in [U_{\tau_r},
g_{\tau_r}(y+\epsilon)] \right)} \label{HarmonicEquivalence}
\end{align}
Using the conformal invariance of Brownian motion, we can compute
the above harmonic measures in the domain $\mathbb{H} \backslash
K_{\tau_r}$ rather than $\mathbb{H}$. Define
\begin{align*}
\mathcal{A}_1 = \{ \beta(\tau_{\mathbb{H} \backslash K_{\tau_r}})
\in [y,y+\epsilon] \}, \,\,\, \mathcal{A}_2 = \{
\beta(\tau_{\mathbb{H} \backslash K_{\tau_r}}) \in [s_{\tau_r},
y+\epsilon] \cup \{ \textrm{right side of } K_{\tau_r} \} \},
\end{align*}
where $s_t$ is as in Lemma \ref{Dist2CurveLemma}. Note $s_{\tau_r} <
y$ since $\tau_r < T_y$. By conformal invariance,
\begin{align*}
\prob_{iL} \left( \beta( \tau_\mathbb{H} ) \in [g_{\tau_r}(y),
g_{\tau_r}(y+\epsilon)] \right) &= \prob_{g_{\tau_r}^{-1}(iL)}(\mathcal{A}_1), \\
\prob_{iL} \left( \beta( \tau_\mathbb{H} ) \in [U_{\tau_r},
g_{\tau_r}(y+\epsilon)] \right) &=
\prob_{g_{\tau_r}^{-1}(iL)}(\mathcal{A}_2).
\end{align*}
Since $g_t$ is normalized so that $g_t(z) = z + o(1)$ as $z \to
\infty$, it follows from \eqref{HarmonicEquivalence} that
\begin{align}
\frac{g_{\tau_r}(y+\epsilon) - g_{\tau_r}(y)}{g_{\tau_r}(y+\epsilon)
- U_{\tau_r}} = \lim_{L \uparrow \infty}
\frac{\prob_{iL}(\mathcal{A}_1)}{\prob_{iL}(\mathcal{A}_2)}.
\label{HittingRatio}
\end{align}
At time $\tau_r$ it is clear that the semi-circle $|z-x|=r$ is
naturally divided into a left arc and a right arc by the point
$\gamma(\tau_r)$ (see Figure \ref{hull-fig}). The left arc we will
refer to as $A_{L,r}$ and the right one as $A_{R,r}$. In the domain
$\mathbb{H} \backslash K_{\tau_r}$ it is clear that the left arc
$A_{L,r}$ naturally ``shields'' the right side of $K_{\tau_r}$ and
the segment $[s_{\tau_r}, y+\epsilon]$, since any Brownian motion
started near infinity that hits these boundaries before any others
must have passed through $A_{L,r}$ first. Hence define the stopping
time
\begin{align*}
\sigma_r = \tau_{\mathbb{H} \backslash K_{\tau_r}} \wedge \inf \{ t
\geq 0 : \beta_t \in A_{L,r} \}.
\end{align*}
Using the Strong Markov Property, the Brownian path from $iL$ to
$[y,y+\epsilon]$ can be decomposed into the path from $iL$ to
$\beta(\sigma_r) \in A_{L,r}$ plus an independent Brownian path from
$\beta(\sigma_r)$ to $[y,y+\epsilon]$. Hence
\begin{align*}
\prob_{iL} \left( \mathcal{A}_1 \right) = \expect_{iL} \left[
\prob_{\beta(\sigma_r)}(\mathcal{A}_1) \right].
\end{align*}
Likewise a similar expression can be derived for the denominator of
\eqref{HittingRatio}, and upon taking the ratio of the two we have
\begin{align*}
\frac{g_{\tau_r}(y+\epsilon) - g_{\tau_r}(y)}{g_{\tau_r}(y+\epsilon)
- U_{\tau_r}} = \lim_{L \uparrow \infty} \frac{\expect_{iL} \left[
\prob_{\beta(\sigma_r)} (\mathcal{A}_1) \right]}{ \expect_{iL}
\left[ \prob_{\beta(\sigma_r)} (\mathcal{A}_2) \right]}.
\end{align*}
Note $\prob_{\beta(\sigma_r)} (\mathcal{A}_1) =
\prob_{\beta(\sigma_r)} (\mathcal{A}_2) = 0$ if $\beta(\sigma_r)
\not \in A_{L,r}$.

Now we take an arbitrary point $z \in A_{L,r}$ and find an upper
bound on $\prob_z(\mathcal{A}_1)$ and a lower bound on
$\prob_z(\mathcal{A}_2)$. The upper bound on
$\prob_z(\mathcal{A}_1)$ is easy, since any Brownian path going from
$z$ to $[y,y+\epsilon]$ in $\mathbb{H} \backslash K_{\tau_r}$ is
also a Brownian path going from $z$ to $[y, y+\epsilon]$ in
$\mathbb{H}$. Hence
\begin{align*}
\pi \prob_z (\mathcal{A}_1) &\leq \pi \prob_z (
\beta(\tau_{\mathbb{H}}) \in [y, y+\epsilon] ) \\
&= \arg(z-y-\epsilon) - \arg(z-y) \\
&= \arg \left( 1 - \frac{\epsilon}{z-y} \right)
\end{align*}
\begin{figure}
\begin{center}
\includegraphics[scale=1]{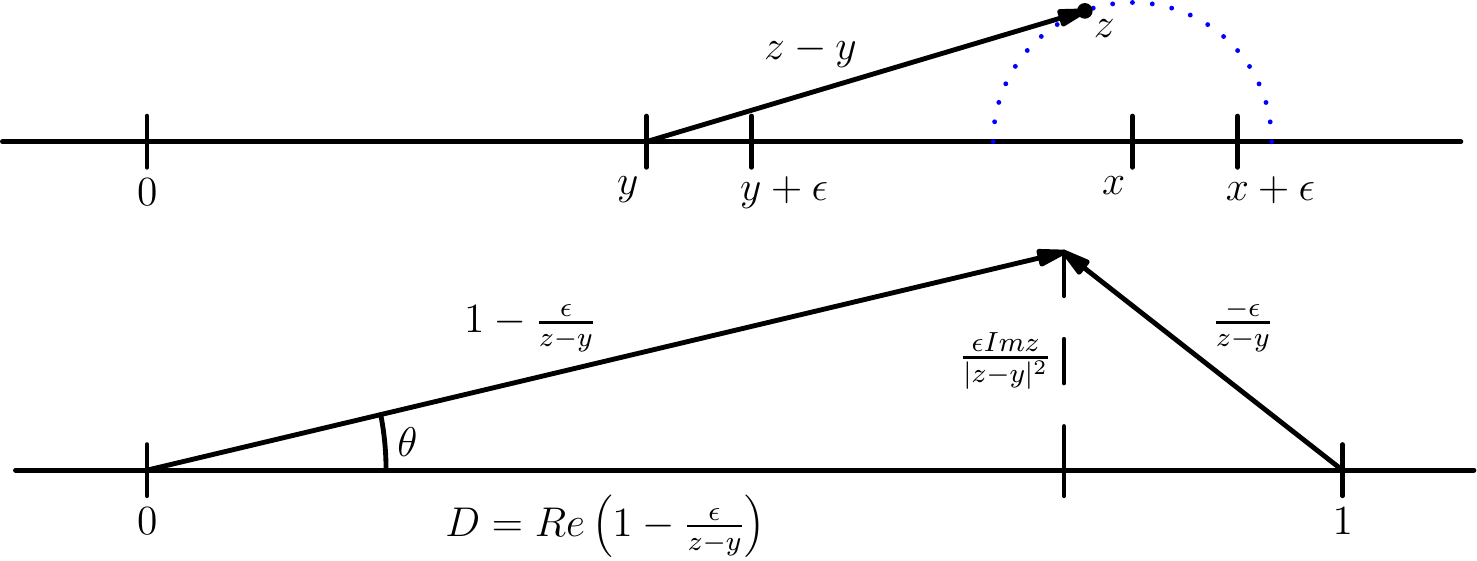}
\caption{Using $r \leq (x-y)/4$ it follows that $|z-y| \geq
\frac{3}{4}(x-y)$. Then by $\epsilon \leq (x-y)/2$ we have
$\frac{\epsilon}{|z-y|} \leq \frac{2}{3}$. Thus $D \geq 1/3$. But
then $\arg \left( 1- \frac{\epsilon}{z-y} \right) = \theta \leq \tan
\theta = \frac{1}{D} \frac{\epsilon \textrm{Im} z}{|z-y|^2} \leq
\frac{16}{3} \frac{\epsilon \textrm{Im} z}{(x-y)^2}$. }
\label{proof-fig}
\end{center}
\end{figure}
Figure \ref{proof-fig} provides a geometric proof, using only
$\epsilon \leq (x-y)/4$ and $r \leq (x-y)/2$, that for some constant
$C > 0$
\begin{align*}
\arg \left( 1 - \frac{\epsilon}{z-y} \right) \leq C \frac{\epsilon
\textrm{Im} z}{(x-y)^2}.
\end{align*}
Hence for all $z \in A_{L,r}$
\begin{align}
\prob_z \left( \mathcal{A}_1 \right) \leq C \frac{\epsilon
\textrm{Im} z}{(x-y)^2}. \label{A1UpperBound}
\end{align}

For $z \in A_{L,r}$ we need a lower bound on
$\prob_z(\mathcal{A}_2)$. Let
\begin{align*}
\mathcal{A}_3 = \mathcal{A}_2 \cap \{ \beta[0, \tau(\mathbb{H}
\backslash K_{\tau_r})] \cap A_{R,r} = \emptyset \}.
\end{align*}
Then $\mathcal{A}_3$ consists of paths in $\mathbb{H} \backslash
K_{\tau_r}$ that exit the domain in $[s_{\tau_r}, y+\epsilon]$ or
the right side of $K_{\tau_r}$ but don't pass through the right arc
$A_{R,r}$ of the semi-circle. Let $V_1 = (-\infty, y+\epsilon) \cup
(x+r, \infty) \cup \{ \textrm{right side of } A_{R,r} \}$, and
\begin{align*}
\mathcal{A}_4 = \{ \beta( \tau(\mathbb{H} \backslash A_{R,r} )) \in
V_1 \}.
\end{align*}
\begin{figure}
\begin{center}
\includegraphics[scale=1]{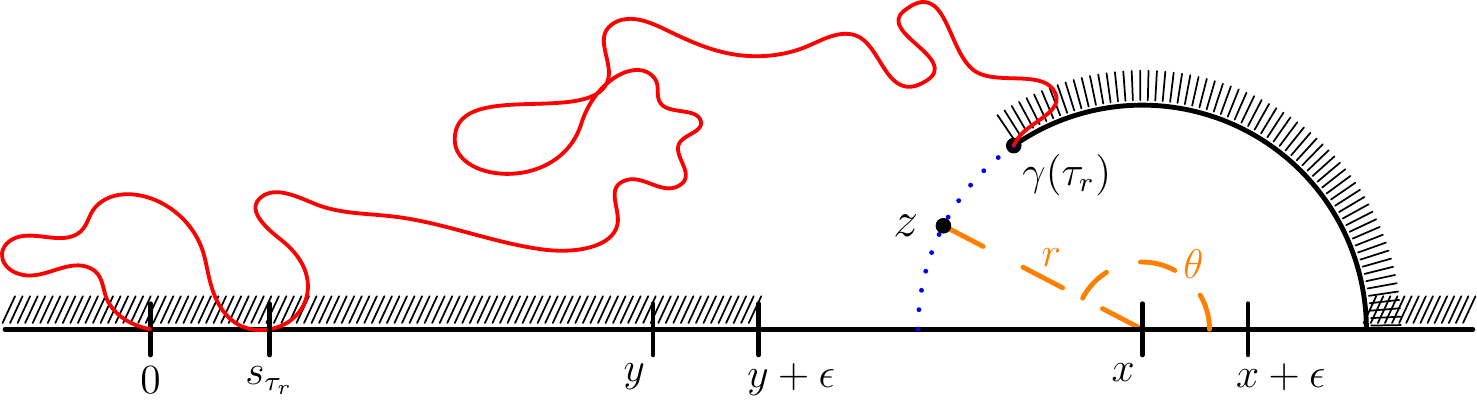}
\caption{The domain $\mathbb{H} \backslash A_{R,r}$ indicated by
solid black boundaries, with the curve $\gamma([0, \tau_r])$ sitting
inside it. The boundary segment $V_1$ is highlighted by tick marks.
Any Brownian path started at $z$ that exits $\mathbb{H} \backslash
A_{R,r}$ on $V_1$ is also a Brownian path in $\mathbb{H} \backslash
K_{\tau_r}$ that exits $\mathbb{H} \backslash K_{\tau_r}$ on
$[s_{\tau_r}, y+\epsilon]$ or the right side of $K_{\tau_r}$.}
\label{arc-fig}
\end{center}
\end{figure}
Topological considerations show that any path in $\mathcal{A}_4$,
started at $z \in A_{L,r}$, must have exited the domain $\mathbb{H}
\backslash K_{\tau_r}$ on $[s_{\tau_r}, y+\epsilon]$ or the right
side of $K_{\tau_r}$ (see Figures \ref{hull-fig} and \ref{arc-fig}),
so that $\mathcal{A}_4 \subset \mathcal{A}_3$. Therefore $\prob_z
(\mathcal{A}_2) \geq \prob_z (\mathcal{A}_3) \geq \prob_z
(\mathcal{A}_4)$. Using basic conformal mappings the probability
$\prob_z (\mathcal{A}_4)$ can be computed explicitly, but for our
purposes a lower bound is sufficient. Map the domain $\mathbb{H}
\backslash A_{R,r}$ into a strip with a slit via $z \mapsto
\log((z-x)/r)$, as shown in Figure \ref{log-fig}(a). Call the image
domain $D$ and let $V_2$ be the image of $V_1$. Let $\theta =
\arg(z-x)$, $\phi = \arg(\gamma(\tau_r) - x)$, so that
\begin{align*}
\prob_z(\mathcal{A}_4) = \prob_{i\theta} \left( \beta(\tau_D) \in
V_2 \right) & \geq \prob_{i\theta} \left( \beta(\tau_D) \in [0,
\infty) \cup \{
\textrm{right side of } [0, i\phi] \} \right) \\
& = \frac{1}{2} \prob_{i \theta} \left( \beta(\tau_D) \in \mathbb{R}
\cup [0, i \phi] \right).
\end{align*}
The last equality is by symmetry. Any Brownian path in the strip $S
= \mathbb{R} \times [0, \pi i]$ that exits $S$ on $\mathbb{R}$ is
also a Brownian path in $D$ that exits $D$ on $\mathbb{R} \cup [0, i
\phi]$, so that
\begin{align*}
\prob_{i \theta} ( \beta(\tau_D) \in \mathbb{R} \cup [0, i \phi])
&\geq \prob_{i \theta} ( \beta(\tau_S) \in \mathbb{R} ) \\
&= \frac{\pi - \theta}{\pi} \\
&\geq \frac{\sin(\pi - \theta)}{\pi} \\
&= \frac{\sin \theta}{\pi} \\
&\geq C \frac{\textrm{Im} z}{r}.
\end{align*}
Therefore there is a constant $C > 0$ such that
\begin{align}
\prob_{z} (\mathcal{A}_2) \geq C \frac{\textrm{Im} z}{r}.
\label{A2LowerBound}
\end{align}
\begin{figure}
\begin{center}
\includegraphics[scale=1]{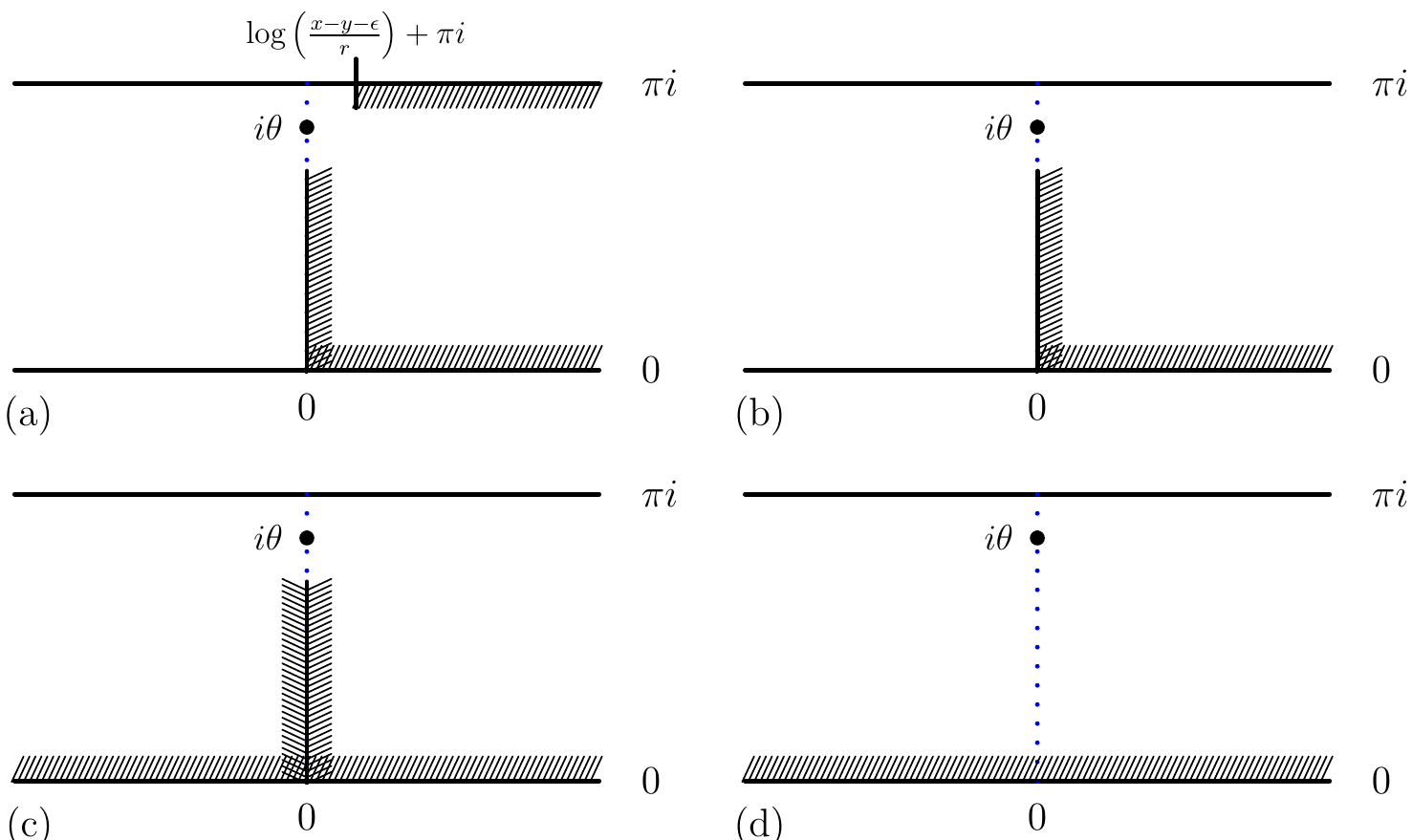}
\caption{(a) The image of the domain $\mathbb{H} \backslash A_{R,r}$
and the point $z$ under the map $w \mapsto \log \left( \frac{w-x}{r}
\right)$. The point $z$ goes to $i \theta$, from which we measure
all the harmonic measure terms. The tick marks highlight the
boundary segment referred to as $V_2$. (b) The harmonic measure of
the highlighted boundary segment is clearly less than the harmonic
measure of $V_2$. (c) The harmonic measure is twice the harmonic
measure in (b), by symmetry. (d) Any Brownian path that exits this
domain on $\mathbb{R}$ must have exited the domain in (c) on
$\mathbb{R}$ or the slit.} \label{log-fig}
\end{center}
\end{figure}

Finally by \eqref{A1UpperBound} and \eqref{A2LowerBound},
\begin{align*}
\prob_{\beta(\sigma_r)}(\mathcal{A}_1) \leq C \frac{\epsilon
\textrm{Im} \beta(\sigma_r)}{(x-y)^2}, \,\,\,
\prob_{\beta(\sigma_r)}(\mathcal{A}_2) \geq C \frac{\textrm{Im}
\beta(\sigma_r)}{r},
\end{align*}
so that
\begin{align*}
\frac{\expect_{iL} \left[ \prob_{\beta(\sigma_r)}(\mathcal{A}_1)
\right]}{\expect_{iL} \left[ \prob_{\beta(\sigma_r)}(\mathcal{A}_2)
\right]} \leq C \frac{\epsilon r}{(x-y)^2}.
\end{align*}
This proves the Lemma.
\end{proof}

Lemma \ref{ExpectationUB} gives us half of the bound on $G(r)$.
Indeed, combining Lemma \ref{ExpectationUB} with
\eqref{HittingDistance} gives
\begin{align} G(r) \leq C \left(
\frac{\epsilon r}{(x-y)^2} \right)^{4a-1} \prob \left( d_{T_y}(x)
\leq r \right). \label{HalfBound}
\end{align}
Now we are only left to estimate the term $\prob (d_{T_y}(x) \leq r)
= \prob (\tau_r < T_y)$. A lower bound is easy, since if the curve
swallows any point on the semi-circle $|z-x| = r$ before $y$ is
swallowed then $\tau_r < T_y$. The probability of $z$ being
swallowed before $y$ is known exactly by Proposition
\ref{HittingProb}, and is well approximated by Corollary
\ref{AboveLineHittingProb}. In fact, choosing $\theta = \pi/2$ in
Corollary \ref{AboveLineHittingProb} gives a lower bound
\begin{align*}
c' \frac{y^{1-2a}}{x^{2a}} (x-y)^{4a-2} r \leq \prob \left( \tau_r <
T_y \right)
\end{align*}
for some constant $c' > 0$. We claim that there is a $C > 0$,
independent of $x,y,$ and $r$, such that
\begin{align}
\prob \left( \tau_r < T_y \right) \leq C \frac{y^{1-2a}}{x^{2a}}
(x-y)^{4a-2} r, \label{SCbeforey}
\end{align}
at least for $r \leq (x-y)/4$. First we suppose that this is true
and show how to get the upper bound estimate
\eqref{ModifiedLowerBound}. From \eqref{SCbeforey} and
\eqref{HalfBound}
\begin{align*}
G(r) \leq C \frac{y^{1-2a}}{x^{2a}} \frac{\epsilon^{4a-1}
r^{4a}}{(x-y)^{4a}} \leq C_{\delta} \frac{\epsilon^{4a-1}
r^{4a}}{(x-y)^{4a}},
\end{align*}
the last inequality coming from $0 < \delta < y < x < 1 - \delta$.
Substituting this into the first integral of \eqref{TwoPartFirst}
gives
\begin{align}
\int_0^{\frac{x-y}{4}} v^{-4a} G(v) dv \leq C
\frac{\epsilon^{4a-1}}{(x-y)^{4a-1}}. \label{IntegralOneBound}
\end{align}
As previously discussed in \eqref{GeometricSum} and
\eqref{TwoPartFirst}, the term $\expect \left[ \indicate{T_y <
T_{y+\epsilon}} d_{T_y}(x)^{1-4a} \right]$ can be broken into three
parts, and then, by \eqref{IntegralTwoBound},
\eqref{IntegralOnePBound}, and \eqref{IntegralOneBound}, each part
is bounded above by $C \epsilon^{4a-1} (x-y)^{1-4a}$. Hence $\expect
\left[ \indicate{T_y < T_{y+\epsilon}} d_{T_y}(x)^{1-4a} \right]
\leq C \epsilon^{4a-1} (x-y)^{1-4a}$, and substituting this into
\eqref{UpperBoundOne} we get that
\begin{align*}
\prob \left( T_y < T_{y+\epsilon}, T_x < T_{x+\epsilon} \right) \leq
C \frac{\epsilon^{2(4a-1)}}{(x-y)^{4a-1}}.
\end{align*}
This last bound is exactly \eqref{ModifiedLowerBound}.

The rest of this section is dedicated to proving \eqref{SCbeforey}.

\begin{lemma}
\label{AllPointHM} Let $w_k = -2^{-k-1} + (1 - 3 \cdot 2^{-k-1})
\frac{\pi}{2}i$ for $k = 1,2,\ldots$, and for $k = -1,-2,\ldots$ let
$w_k = \overline{w_{-k}}$. Let $z_k = x + r \exp \{ w_k +
\frac{\pi}{2} i \}$. Then
\begin{align*}
\prob \left( \bigcup_{|k| \geq 1} T_{z_k} < T_y \right) \leq
\sum_{|k| \geq 1} \prob \left( T_{z_k} < T_y \right) \asymp
\frac{y^{1-2a}}{x^{2a}}(x-y)^{4a-2} r
\end{align*}
\end{lemma}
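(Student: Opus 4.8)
The first inequality is nothing but countable subadditivity of $\prob$, so the entire content is the two-sided estimate
\[
\sum_{|k|\ge 1}\prob\left(T_{z_k}<T_y\right)\asymp \frac{y^{1-2a}}{x^{2a}}(x-y)^{4a-2}\,r .
\]
The plan is to apply Corollary \ref{AboveLineHittingProb} to each point $z_k$ separately and then sum a convergent series. First I would rewrite $z_k = x+\rho_k e^{i\alpha_k}$ and read off the modulus and argument from the definition of $w_k$: for $k\ge 1$ one gets $\rho_k = r\,e^{-2^{-k-1}}$ and $\alpha_k = \bigl(2-3\cdot 2^{-k-1}\bigr)\tfrac{\pi}{2}=\pi-\tfrac{3\pi}{4}2^{-k}\in(0,\pi)$, and the rule $w_{-k}=\overline{w_{k}}$ yields $\rho_{-k}=\rho_k$ and $\alpha_{-k}=\pi-\alpha_k$, so in particular every $z_k$ lies in $\mathbb{H}$ and $\sin\alpha_{-k}=\sin\alpha_k$. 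Since $2^{-|k|-1}\le \tfrac14$ we have $r e^{-1/4}\le \rho_k\le r$, and as we are working under the standing hypotheses $0<\delta<y<x<1-\delta$ and $r\le (x-y)/4$ from the surrounding discussion, each $\rho_k\le r\le (x-y)/4$, so Corollary \ref{AboveLineHittingProb} applies uniformly in $k$ and gives
\[
\prob\left(T_{z_k}<T_y\right)\asymp \frac{y^{1-2a}}{x^{2a}}(x-y)^{4a-2}\,\rho_k\sin\alpha_k ,
\]
with $\asymp$-constants depending only on $a$ (hence independent of $k,x,y,\epsilon$).

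Next I would sum over $|k|\ge 1$. The $x,y$-dependent prefactor factors out, leaving $\sum_{|k|\ge 1}\rho_k\sin\alpha_k$. Because $\rho_k\asymp r$ uniformly, this is comparable to $r\sum_{|k|\ge 1}\sin\alpha_k = 2r\sum_{k\ge 1}\sin\!\bigl(\tfrac{3\pi}{4}2^{-k}\bigr)$, using the conjugation symmetry. This last series converges: $\sin\!\bigl(\tfrac{3\pi}{4}2^{-k}\bigr)\le \tfrac{3\pi}{4}2^{-k}$ dominates it by a geometric series from above, while its $k=1$ term alone bounds it below by a positive absolute constant. Hence $\sum_{|k|\ge 1}\rho_k\sin\alpha_k\asymp r$ with absolute constants, and combining this with the displayed single-point estimate produces the claimed comparison, with all implicit constants depending only on $a$.

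No step here is a genuine obstacle; the whole argument is bookkeeping on top of Corollary \ref{AboveLineHittingProb}. The only point that requires care is that the term-by-term use of that corollary be uniform in $k$ (so that the relation $\asymp$ is preserved under summation) and that the resulting series of sines be summable — and this is precisely what the particular dyadic choice of the $w_k$ guarantees, since it forces the angles $\alpha_k$ to approach the two endpoints $0$ and $\pi$ of the semicircular arc at a geometric rate while the moduli $\rho_k$ stay comparable to $r$.
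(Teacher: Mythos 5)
Your proof is correct and follows essentially the same route as the paper's: apply Corollary \ref{AboveLineHittingProb} term by term (noting $\rho_k\le r\le (x-y)/4$ so the corollary applies uniformly), factor out the $x,y$-prefactor, observe $\rho_k\asymp r$, and sum the geometric series $\sum\sin(\tfrac{3\pi}{4}2^{-k})$. The paper's write-up is more compressed but contains the same computation.
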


\begin{proof}
The first inequality is trivial, and using Corollary
\ref{AboveLineHittingProb}
\begin{align*}
\sum_{|k| \geq 1} \prob \left( T_{z_k} < T_y \right) &\asymp
\frac{y^{1-2a}}{x^{2a}} (x-y)^{4a-2} \sum_{|k| \geq 1} r \exp \{
-2^{-|k|-1} \} \sin( \pi - 3 \cdot 2^{-|k|-2} \pi)
\\
& \asymp \frac{y^{1-2a}}{x^{2a}} (x-y)^{4a-2} \sum_{|k| \geq 1} r \sin( 3 \cdot 2^{-|k|-2} \pi) \\
& \asymp \frac{y^{1-2a}}{x^{2a}} (x-y)^{4a-2} r.
\end{align*}
\end{proof}

\begin{figure}
\begin{center}
\includegraphics[scale=.8]{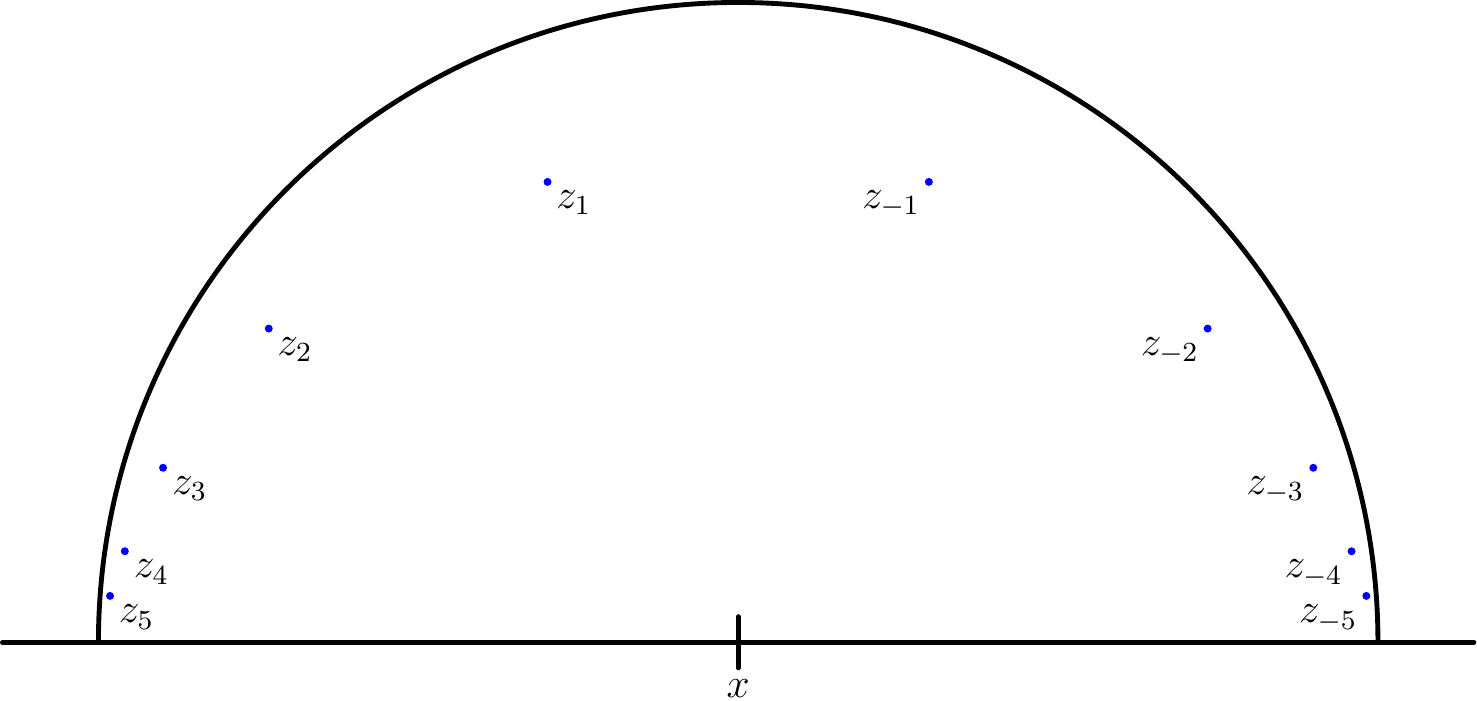}
\caption{The semi-circle of radius $r$ centered at $x$ with the
points $z_k$ inside.} \label{semi-fig}
\end{center}
\end{figure}

Notice that the points $z_k$ sit inside the semi-circle $|z - x| =
r$ (see Figure \ref{semi-fig}), and so if $T_{z_k} < T_y$ for some
$k$ then $\tau_r < T_y$. Conversely, the $z_k$ have been chosen in
such a way that if $\tau_r < T_y$ then it's likely that $T_{z_k} <
T_y$ for some $k$. We prove this last statement shortly, but to do
so we first require a small Lemma on harmonic measure.

\begin{lemma}
\label{RectHM} Let $S$ denote the strip $\mathbb{R} \times [0, \pi
i]$ and let the $w_k$ be as in Lemma \ref{AllPointHM}. There exists
a universal constant $l > 0$ such that if $\phi : [0,1] \to S$ is a
non-self-crossing curve (possibly having multiple points) with
$\textrm{Re } \phi(t) > 0$ for $t \in [0,1)$, $\textrm{Im } \phi(0)
= \pi$, and $\textrm{Re } \phi(1) = 0$ (see Figure
\ref{logracetrack}), and $H$ is the hull that $\phi$ generates (i.e.
the complement of the unbounded connected component of $S \backslash
\phi[0, \infty)$), then $\prob_{w_k} \left( \beta(\tau_{S \backslash
H}) \in \{ \textrm{right side of } \phi \} \right) \geq l$ and
$\prob_{w_k} \left( \beta(\tau_{S \backslash H}) \in \{ \textrm{left
side of } \phi \} \right) \geq l$, for some $k$.
\end{lemma}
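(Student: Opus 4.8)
The plan is to reduce the statement to a compactness argument over the space of admissible curves $\phi$. First I would observe that the points $w_k$ accumulate only at the two corners $\pm\frac{\pi}{2}i$ of the strip (equivalently, on the two boundary rays $\{\mathrm{Im}\,w = 0\}$ and $\{\mathrm{Im}\,w = \pi\}$ at $-\infty$), and that each $w_k$ lies at a definite ``depth'' $\mathrm{Re}\,w_k = -2^{-|k|-1}$ to the left of the imaginary axis, with $\mathrm{Im}\,w_k$ bounded away from $0$ and $\pi$ by a comparable amount. Thus for each fixed $k$, the point $w_k$ sits at a fixed location in the left half-strip $\{\mathrm{Re}\,w < 0\}\cap S$, while the curve $\phi$ is entirely contained in the right half-strip $\{\mathrm{Re}\,w \ge 0\}\cap S$ except for its endpoint $\phi(1)$ on the imaginary axis. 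The key structural point is that $\phi$ together with the top edge (from $\phi(0)$, whose imaginary part is $\pi$, down to the point where $\phi$ meets the axis) separates the strip, so a Brownian motion from $w_k$ that reaches the right half-strip must cross near $\phi$; the two sides of $\phi$ are each ``visible'' from the left through a positive-harmonic-measure channel.

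The main steps, in order, would be: (1) Fix attention on a single well-chosen index, say $k=1$, so $w_1 = -\tfrac14 + (1-\tfrac34)\tfrac{\pi}{2}i$ sits in the open left half-strip at a fixed location independent of $\phi$. (2) Show that from $w_1$, Brownian motion in $S$ hits the segment $I = \{0\}\times[\,\alpha,\pi-\alpha\,]$ of the imaginary axis (for a suitable fixed $\alpha>0$) with probability bounded below by a universal constant, using the explicit harmonic measure in the strip (the map $w\mapsto e^{w}$ sends $S$ to $\mathbb H$ and $I$ to an arc, and the hitting probability is an explicit, continuous, strictly positive quantity at $w_1$). (3) Since $\phi(1)$ lies on the imaginary axis and $\mathrm{Re}\,\phi(t)>0$ for $t<1$, the curve $\phi$ meets the axis; a topological/separation argument (the ``right side'' and ``left side'' of $\phi$ are both accessible from the imaginary-axis side because $\phi$ is non-self-crossing and ends \emph{on} the axis) shows that from any point of $I$, Brownian motion run until it exits $S\setminus H$ lands on the right side of $\phi$ with probability at least some $l_R>0$ and on the left side with probability at least some $l_L>0$, where $l_R, l_L$ depend only on the geometry of $I$ relative to the axis, not on $\phi$ — this is where I would invoke a Beurling-type estimate or a direct conformal-mapping lower bound, exploiting that $\phi$ touching the axis at $\phi(1)$ pins a definite piece of each side of $\phi$ near the axis. (4) Combine (2) and (3) by the strong Markov property at the hitting time of $I$: $\prob_{w_1}(\beta(\tau_{S\setminus H}) \in \{\text{right side of }\phi\}) \ge \prob_{w_1}(\text{hit }I)\cdot l_R \ge l$, and similarly for the left side, with $l := \min(l_R,l_L)\cdot c > 0$ universal.

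The step I expect to be the main obstacle is (3): controlling the harmonic measure of each side of $\phi$ from points on the axis, \emph{uniformly} over all admissible $\phi$. The subtlety is that $\phi$ may be very wild — it may spiral, come back near the axis many times, or press one of its sides almost flat against the axis — so I cannot hope for a bound depending on the shape of $\phi$. The resolution is to use only the minimal guaranteed structure: $\phi$ is non-self-crossing, starts at height $\pi$, and ends on the axis with its interior in the open right half-strip. The endpoint $\phi(1)\in\{0\}\times(0,\pi)$, being an accessible boundary point of $S\setminus H$ reachable from both flanks of $\phi$, forces a lens-shaped neighborhood of $\phi(1)$ in $S\setminus H$ each of whose two ``lobes'' abuts one side of $\phi$ and also abuts the axis; a Beurling projection argument then gives that Brownian motion from a nearby axis point enters the correct lobe — and hence exits on the correct side of $\phi$ — with universally bounded-below probability, regardless of how $\phi$ behaves elsewhere. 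Once this local statement near $\phi(1)$ is established, combining it with the (universal, $\phi$-independent) estimate that $w_k$ reaches a neighborhood of $\phi(1)$ on the axis with positive probability completes the proof; if for the chosen $k$ the point $\phi(1)$ is too close to a corner, one simply picks a different $k$ from the sequence, which is why the statement quantifies ``for some $k$''.
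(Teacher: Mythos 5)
Your plan shares the key structural intuition with the paper's proof — choose $k$ so that $w_k$ sits at the same scale as $\phi(1)$'s distance from the edges of the strip, then use a scale-invariant harmonic-measure estimate near $\phi(1)$ — but as written it has two real problems, the second of which I think is a genuine gap.

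First, the exposition is inverted: you fix $k=1$ and a fixed axis segment $I$ in steps (1)--(2), and only at the very end note that one must choose a different $k$ when $\phi(1)$ is near a corner. But the choice of $k$ has to be made \emph{before} anything else, because the whole construction (the analogue of $I$, the constants in steps (2)--(3)) must be built at the scale $2^{-|k|}$ around $w_k$. With a fixed $I$, step (2) is fine, but step (3) then fails: if $\phi(1)$ is far below $I$, and $\phi$ drops a long, nearly-touching spike to the bottom edge before coming back to $\phi(1)$, the harmonic measure from $I$ of the far side of the spike can be made arbitrarily small, so there is no uniform $l_R$.

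Second, and more seriously, even after the $k$-fix, the ``lens/lobe'' resolution of step (3) is where the real work lies and it is not nailed down. Beurling-type projection estimates give \emph{upper} bounds on harmonic measure of a slit, not the lower bound you need, so invoking ``a Beurling projection argument'' does not by itself produce a universal $l_R$. The difficulty is that $\phi$ may fold back on itself near $\phi(1)$ (it cannot encircle $\phi(1)$, but it can press one of its two flanks against a near-touch with itself at scale $\ll 2^{-|k|}$), so the ``lobe'' abutting one side may be connected to the axis-neighborhood of $\phi(1)$ only through a channel of arbitrarily small width, and the simple local picture of two comparable lobes breaks down. Your step (3) would then need to reach that side of $\phi$ at a \emph{different} location, by going around the fold — which is a global statement, not a local one.

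The paper sidesteps this entirely with a more robust topological device: it builds a fixed annular ``racetrack'' $\mathcal{R} = \mathcal{R}_1\setminus\mathcal{R}_2$, scaled and translated so that $w_k$ sits in its left part and $\phi(1)$ sits inside the inner rectangle $\mathcal{R}_2$. Since $\phi(0)$ lies on $\partial S$, outside $\mathcal{R}_1$, the curve must cross the annulus. A Brownian particle started at $w_k$ that circulates clockwise around $\mathcal{R}_2$ before exiting $\mathcal{R}$ must therefore cross $\phi$, and by an orientation argument it first meets the right side of $\phi$; circulating counterclockwise, it first meets the left side. The probability of circulating in either direction before leaving $\mathcal{R}$ is a single fixed constant $l$ by scale invariance of Brownian motion. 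Crucially, this never looks at the local geometry of $\phi$ near $\phi(1)$ at all — only at the macroscopic fact that $\phi$ passes through the annulus — which is precisely what makes the lower bound uniform over all admissible $\phi$. If you want to make your plan rigorous, I would recommend replacing the lens/Beurling step with this circulation argument (or proving a genuine lower-bound lemma for harmonic measure of a prescribed side of a slit, which I suspect will reconstruct the annulus argument anyway).

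One small notational point: the points $w_k$ do not accumulate at ``corners $\pm\frac{\pi}{2}i$'' of $S$; those are interior points of the strip (indeed $-\frac{\pi}{2}i$ is not in $S$). The relevant accumulation points are the two endpoints $0$ and $\pi i$ of the segment $\{0\}\times[0,\pi]$ where the imaginary axis meets $\partial S$, and the $w_k$ are graded in scale toward those; this is exactly what lets one choose $k$ to match the scale of $\operatorname{dist}(\phi(1),\partial S)$.
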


\begin{proof}
First consider the sets
\begin{align*}
\mathcal{R}_1 &= \left \{ x+iy : |x| \leq \frac{1}{5} +
\frac{1}{10}, |y| \leq \frac{\pi}{8} +
\frac{1}{10} \right \}, \\
\mathcal{R}_2 &= \left \{ x+iy : |x| \leq \frac{1}{5}, |y| \leq
\frac{\pi}{8} \right \},
\end{align*}
and $\mathcal{R} = \mathcal{R}_1 \backslash \mathcal{R}_2$. A sketch
of $\mathcal{R}$ is given in Figure \ref{racetrack}. Note that $w_0
:= -1/4 \in \mathcal{R}$. Let $\mathcal{L}$ be the line segment from
$-\pi i/8$ to $-\pi i/8 - i/10$, and $\mathcal{L}'$ be the complex
conjugate of the set of points in $\mathcal{L}$. Consider a Brownian
particle started at $w_0$ and killed when it hits the boundary of
$\mathcal{R}$. There is a positive probability that the particle
arrives at $\mathcal{L}$ in the clockwise direction before it
arrives there in the counterclockwise direction, call this
probability $l$. By symmetry this is also the probability that the
particle first reaches $\mathcal{L}'$ in the counterclockwise
direction. An important feature of this probability $l$ is that it
is invariant under scalings and translations of the rectangle
$\mathcal{R}$. We  now cover the imaginary axis from $0$ to $\pi i$
with scaled and translated versions of $\mathcal{R}$ that send $w_0$
to the various $w_k$, as in Figure \ref{logracetrack}. The idea is
that the tip of the curve $\phi(1)$ lies inside one of the
rectangles in Figure \ref{logracetrack}, and then for this rectangle
if the Brownian particle travels from $w_k$ to $\mathcal{L}$ in the
clockwise direction before reaching it in the counterclockwise
direction then it must have hit the right hand side of the curve
$\phi$. The next paragraph provides the details of this argument.
\begin{figure}
\begin{center}
\includegraphics[scale=.8]{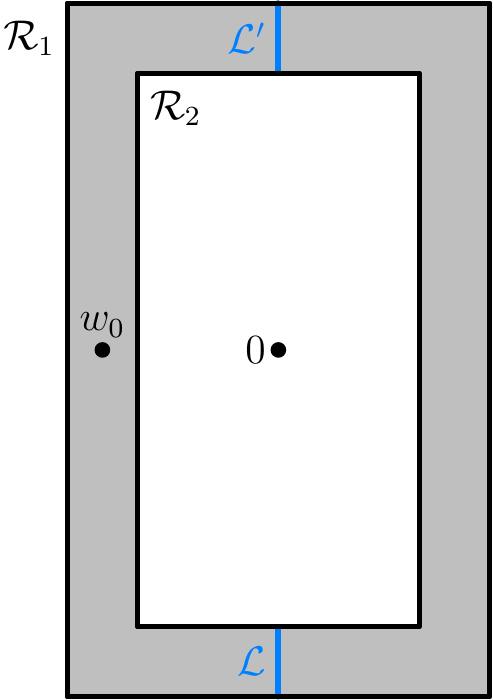}
\caption{The set $\mathcal{R}$ (the shaded region). We let $l$ be
the probability that a Brownian particle started at $w_0$ hits
$\mathcal{L}$ in the clockwise direction before hitting it in the
counterclockwise direction.} \label{racetrack}
\end{center}
\end{figure}

Let $\theta = \textrm{Im } \phi(1) \in [0, \pi]$. Choose the integer
$k$ as follows: if $\theta \geq \pi/2$ then let $k \geq 1$ be such
that $(1 - 2^{-k+1})\pi/2 \leq \theta - \pi/2 \leq (1-2^{-k})\pi/2$,
otherwise let $k \leq -1$ be such that $(1-2^{k+1})\pi/2 \leq \pi/2
- \theta \leq (1-2^k)\pi/2$. Then take the rectangle $\mathcal{R}$
and the point $w_0$, scale them by a factor of $2^{-|k| + 1}$, and
translate both so that the point $w_0$ coincides with point $w_k$.
By construction the point $\phi(1)$ lies somewhere on the vertical
line subdividing the inner rectangle $\mathcal{R}_2$, and the curve
$\phi(t)$ divides the set $\mathcal{R}$. An example with $\theta \in
[\pi/2, 3\pi/4]$ and $k=1$ is shown in Figure \ref{logracetrack}.
For topological reasons, a Brownian particle started at $w_k$ that
hits the line segment $\mathcal{L}$ in the clockwise direction must
have intersected the right side of $\phi$ along the way. This shows
that $\prob_{w_k} \left( \beta( \tau_{S \backslash H} \in \{
\textrm{right side of } \phi \} \right) \geq l$. A completely
symmetrical argument proves the Lemma for the left hand side of
$\phi$.
\begin{figure}
\begin{center}
\includegraphics[scale=1]{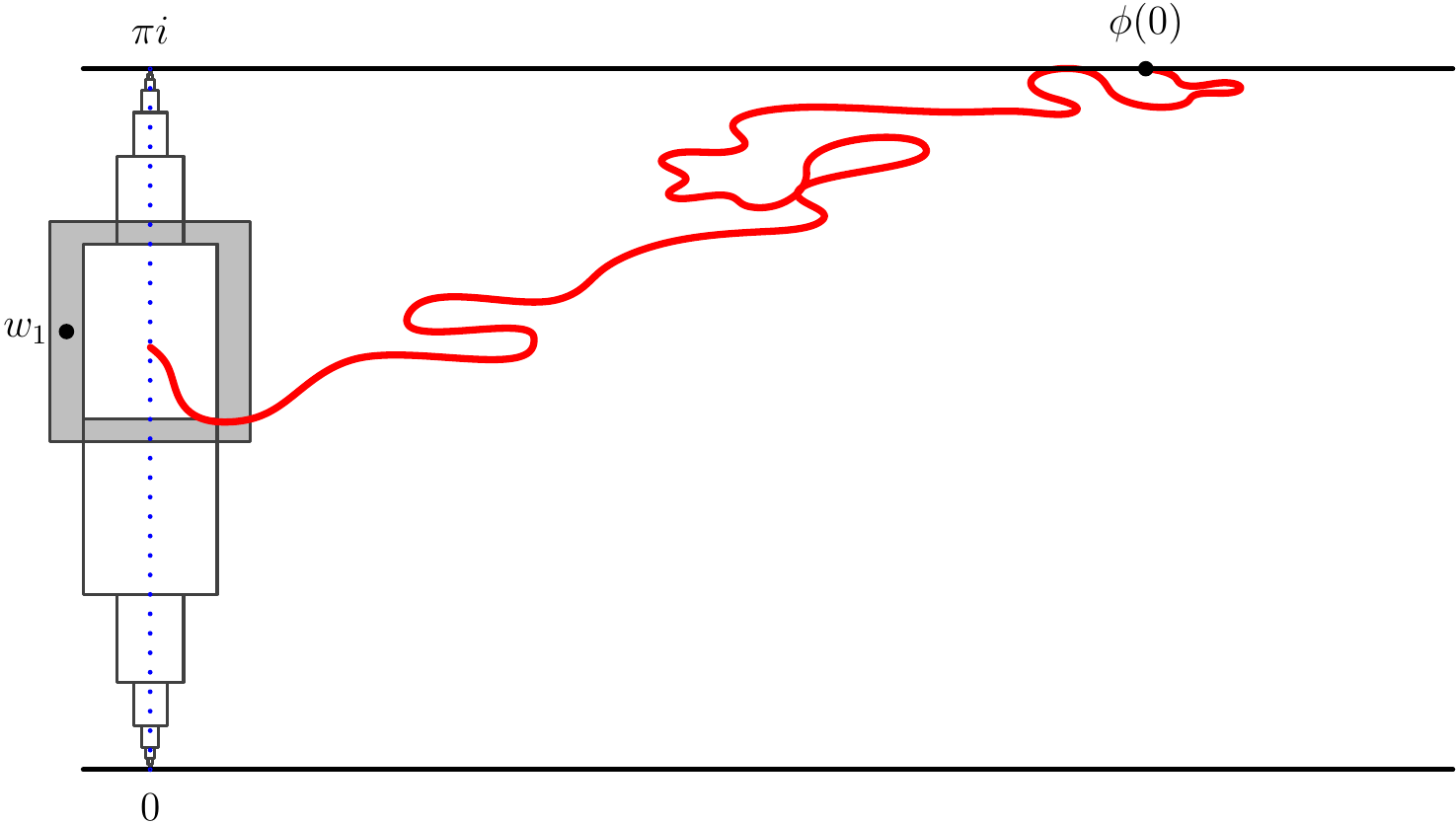}
\caption{The imaginary axis is covered by scaled and shifted
versions of the rectangle $\mathcal{R}_2$. The point $\phi(1)$ must
lie inside one of them, in this case it's the rectangle
corresponding to $k=1$. From the point $w_1$ the harmonic measure of
each side of the curve must be at least $l$.} \label{logracetrack}
\end{center}
\end{figure}
\end{proof}

\begin{lemma}
\label{OnePointHM} Let $z_k$ be as in Lemma \ref{AllPointHM}. There
exists a $c > 0$ such that
\begin{align*}
\prob \left( \left. \bigcup_{|k| \geq 1} T_{z_k} < T_y \right |
\tau_r < T_y \right) \geq c,
\end{align*}
for all $r \leq \frac{x-y}{4}$. The constant $c$ is independent of
$x,y,$ and $r$.
\end{lemma}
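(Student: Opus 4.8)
The plan is to condition on $\mathcal{F}_{\tau_r}$ on the event $\{\tau_r < T_y\}$ and show that, uniformly over all admissible configurations of the hull $K_{\tau_r}$, there is a probability at least $c$ that one of the $z_k$ is swallowed before $y$. Since $\{\tau_r < T_y\}$ is $\mathcal{F}_{\tau_r}$-measurable, it suffices to prove: for any realization of $K_{\tau_r}$ with $\gamma(\tau_r)$ on the semi-circle $|z-x|=r$ and $s_{\tau_r} < y$, conditionally on $\mathcal{F}_{\tau_r}$ the probability that $T_{z_k} < T_y$ for some $k$ is bounded below by a universal constant. First I would recall that whether a point $z$ is swallowed before $y$ (i.e. $T_z < T_y$, conditionally on $\mathcal{F}_{\tau_r}$) is exactly the event that $z$ and $y$ lie in different connected components of $\mathbb{H}\backslash\gamma[\tau_r,\infty)$ at the time $y$ is swallowed; by the standard harmonic-measure characterization of SLE hitting (Corollary \ref{RealLineHittingProb} applied after mapping down by $g_{\tau_r}$, together with the fact that $F(v)\asymp(1-v)^{4a-1}$ is bounded below once $v$ is bounded away from $1$), it is enough to show that the images $g_{\tau_r}(z_k)$ and $g_{\tau_r}(y)$ are separated, in a quantitative harmonic-measure sense, relative to the tip $B_{\tau_r}=g_{\tau_r}(\gamma(\tau_r))$.

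The key step is to transport the geometry to the logarithmic strip coordinate used earlier. Map $\mathbb{H}\backslash A_{R,r}$ (or more precisely the relevant domain bounded by the left arc $A_{L,r}$ and the curve $\gamma[0,\tau_r]$) by $w\mapsto\log((w-x)/r)$ into the strip $S=\mathbb{R}\times[0,\pi i]$ with a slit, exactly as in the proof of Lemma \ref{ExpectationUB}; under this map the points $z_k = x + r\exp\{w_k + \tfrac{\pi}{2}i\}$ go precisely to the prescribed points $w_k$ in the strip, and the image of the curve $\gamma[0,\tau_r]$ (read backwards from the tip) is a non-self-crossing curve $\phi$ of the type considered in Lemma \ref{RectHM}: it starts on the boundary $\mathrm{Im}=\pi$ (or $\mathrm{Im}=0$), stays in $\mathrm{Re}>0$, and its tip $\phi(1)$ sits on the imaginary axis $\mathrm{Re}=0$ at height $\theta = \arg(\gamma(\tau_r)-x)$. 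Lemma \ref{RectHM} then gives a universal $l>0$ and an index $k$ such that a Brownian motion from $w_k$ hits the right side of $\phi$ with probability at least $l$ and (by the symmetric statement) the left side with probability at least $l$. Translating back, this says that $z_k$ is separated from $y$ by the curve $\gamma[0,\tau_r]$ with enough harmonic measure on each side; combined with the lower bound on $F$ away from $1$ and the Domain Markov property, this yields $\prob(T_{z_k}<T_y\mid\mathcal{F}_{\tau_r})\geq c$ for a universal $c>0$, hence the claim.

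The main obstacle I expect is the bookkeeping needed to convert "the curve $\gamma[0,\tau_r]$ separates $z_k$ from $y$ with harmonic measure $\geq l$ on both sides" into the lower bound $\prob(T_{z_k}<T_y\mid\mathcal{F}_{\tau_r})\geq c$. This requires care on two points: (i) the event $\{T_{z_k}<T_y\}$ concerns the \emph{future} curve $\gamma[\tau_r,\infty)$, so after mapping by $g_{\tau_r}$ one is looking at a fresh SLE from $B_{\tau_r}$ and must argue that the images $g_{\tau_r}(z_k)$ and $g_{\tau_r}(y)$ are positioned so that the quantity $v = (g_{\tau_r}(z_k)-\ldots)/(\ldots)$ appearing in Corollary \ref{RealLineHittingProb} is bounded away from $1$ — and this is exactly what the two-sided harmonic-measure estimate of Lemma \ref{RectHM} delivers, since it controls the conformal position of $w_k$ relative to both ends of $\phi$; and (ii) one must make sure the constraint $r\leq(x-y)/4$ guarantees that the slit curve $\phi$ really does have its tip on the segment $\mathrm{Re}=0$ and that $y$ maps to a point with $\mathrm{Re}<0$ in the strip, so that the topological picture in Lemma \ref{RectHM} applies verbatim. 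None of these steps is deep, but assembling them cleanly — and in particular quoting Lemma \ref{RectHM} with the correct orientation of $\phi$ depending on whether $\theta\geq\pi/2$ or $\theta<\pi/2$ — is where the real work lies.
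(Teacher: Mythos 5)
Your plan matches the paper's in its outline: condition on $\mathcal{F}_{\tau_r}$, transport everything to the strip via $z\mapsto\log((z-x)/r)$ so that the $z_k$ become the $w_k$ and the image of $\gamma[0,\tau_r]$ becomes the curve $\phi$ of Lemma \ref{RectHM}, and then extract a two-sided harmonic measure bound from some $w_k$. Where the argument breaks down is precisely at what you flag as the ``main obstacle,'' the conversion of the two-sided bound into $\prob(T_{z_k}<T_y\mid\mathcal{F}_{\tau_r})\geq c$. You propose to invoke Corollary \ref{RealLineHittingProb} and argue that some real ratio ``$v = (g_{\tau_r}(z_k)-\ldots)/(\ldots)$'' is bounded away from $1$. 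But Corollary \ref{RealLineHittingProb} applies only to points on the real line, and the $z_k$ are interior points of $\mathbb{H}$; after mapping down by $g_{\tau_r}$ the image $g_{\tau_r}(z_k)$ is still complex, and there is no real quantity $v\in(0,1)$ to which ``$F$ bounded away from $1$'' could refer. For a complex point the swallowing probability is, by Proposition \ref{HittingProb}, a barycentric coordinate of $F(z)$ in the Schwarz--Christoffel triangle $T$, and by Corollary \ref{trilinear} it equals $D_0\,\dist(F(z),S_0)$, a genuinely two-dimensional quantity.

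The missing ingredient is therefore exactly the triangle/trilinear picture. Set $\hat g_t(z) = (g_t(z)-U_t)/(g_t(y)-U_t)$ and $H_t = F\circ\hat g_t$, so that the Domain Markov property and Corollary \ref{trilinear} give $\prob(T_{z_k}<T_y\mid\mathcal{F}_{\tau_r}) = D_0\,\dist(H_{\tau_r}(z_k),S_0)$ on $\{\tau_r<T_y\}$. Then one observes that $H_{\tau_r}^{-1}$ identifies $S_1$ and $S_\infty$ with the left and right portions of $\partial(\mathbb{H}\backslash K_{\tau_r})$. The two-sided bound from Lemma \ref{RectHM}, via conformal invariance of Brownian motion under $H_{\tau_r}$, becomes: Brownian motion from $H_{\tau_r}(z_k)$ in $T$ hits $S_1$ first with probability $\geq l$ and hits $S_\infty$ first with probability $\geq l$. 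Finally, a simple geometric fact about the triangle $T$ says that any point with these two properties lies at distance at least $c(l,a)>0$ from $S_0$, which finishes the proof. Your proposal has the right strategy and correctly identifies the strip picture and Lemma \ref{RectHM} as the tools, but without Corollary \ref{trilinear} and the triangle the two-sided harmonic measure estimate has no way to turn into a hitting probability bound for the complex points $z_k$.
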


\begin{proof}
We will actually prove the stronger statement
\begin{align*}
\prob \left( T_{z_k} < T_y \textrm{ for some k } \mid
\mathcal{F}_{\tau_r} \right) \geq c \indicate{\tau_r < T_y}.
\end{align*}
Let
\begin{align}
\hat{g}_t(z) = \frac{g_t(z) - U_t}{g_t(y) - U_t},
\end{align}
which is well-defined for $t < T_y$, maps from $\mathbb{H}
\backslash K_t \to \mathbb{H}$ and sends $\gamma(t) \to 0, y \to 1,$
and $\infty \to \infty$. Also let $H_t = F \circ \hat{g}_t :
\mathbb{H} \backslash K_t \to T$, where $F$ is the
Schwarz-Christoffel map from Lemma \ref{HittingProb} and $T$ is the
triangle that $F$ maps into. By the Domain Markov Property and
Corollary \ref{trilinear},
\begin{align*}
P \left( T_z < T_y \mid \mathcal{F}_t \right) = D_0 \dist( H_t(z),
S_0), \,\, \textrm{for } t < T_y \wedge T_z.
\end{align*}
Since $|z_k - x| \leq r$ we know $T_{z_k} \geq \tau_r$, so that
\begin{align*}
P \left( T_{z_k} < T_y \mid \mathcal{F}_{\tau_r} \right) = D_0
\dist( H_{\tau_r}(z_k), S_0), \,\, \textrm{for } \tau_r < T_y.
\end{align*}
Clearly then it is enough to find a $c > 0$ such that $\dist(
H_{\tau_r}(z_k), S_0) \geq c$ for some $k$. Again we turn to
harmonic measure estimates. Let $l$ be the universal constant from
Lemma \ref{RectHM} and consider a point $w \in T$ such that a
Brownian particle in $T$, started at $w$, has at least probability
$l$ of hitting the side $S_1$ before any other, and also probability
$l$ of hitting $S_\infty$ before any other side of $T$. Then $w$
cannot be arbitrarily close to $S_0$, otherwise the probability of
hitting one of the sides $S_1$ or $S_\infty$ would have to be small,
so there exists a constant $c = c(l,a)$ such that $\dist(w, S_0)
\geq c$. Hence it is enough to show that for some $k$, a Brownian
particle in $T$, started at $H_{\tau_r}(z_k)$, has at least
probability $l$ of hitting side $S_1$ first, and also probabilty $l$
of hitting side $S_\infty$ first. Using the conformal invariance of
Brownian motion, and noting that the map $H_{\tau_r}^{-1}$
identifies the sides $S_1, S_\infty$ of $T$ with the boundaries $U_1
= (-\infty,0) \cup \{ \textrm{left side of } K_{\tau_r} \} ,
U_\infty = [0,y] \cup \{ \textrm{right side of } K_{\tau_r} \}$ of
$\mathbb{H} \backslash K_{\tau_r}$ (respectively), this is
equivalent to showing a Brownian particle in $\mathbb{H} \backslash
K_{\tau_r}$, started at $z_k$, has probability at least $l$ of
hitting the boundary segment $U_1$ first, and probability at least
$l$ of hitting the boundary segment $U_\infty$ first. But Lemma
\ref{RectHM} already proves this last statement; all that is left to
do is to map $\mathbb{H}$ to the strip $U$ via $z \mapsto \log
((z-x)/r)$ and note that the points $z_k$ go to the points $w_k$.
\end{proof}

Lemmas \ref{OnePointHM} and \ref{AllPointHM} now combine to show
\begin{align*}
\prob ( \tau_r < T_y ) \leq \frac{1}{c} \prob \left( \bigcup_{|k|
\geq 1} T_{z_k} < T_y \right) \leq C' \frac{y^{1-2a}}{x^{2a}}
(x-y)^{4a-2} r.
\end{align*}
This completes the proof of \eqref{SCbeforey}, and also of the
two-interval estimate \eqref{ModifiedLowerBound}.

\bigskip
\bigskip

\noindent \textbf{Acknowledgements:} We thank Greg Lawler for some
very helpful ideas in coming up with the proof presented in Section
\ref{LowerBoundSection}.

\bibliographystyle{alpha}
\bibliography{../SLEBiB/SLE}

\begin{thebibliography}{Dub03}

\bibitem[Bef04]{beffara:sle6dim}
Vincent Beffara.
\newblock Hausdorff dimensions for {$\rm SLE\sb 6$}.
\newblock {\em Ann. Probab.}, 32(3B):2606--2629, 2004.

\bibitem[Bef07]{beffara:curvedim}
Vincent Beffara.
\newblock The dimension of the {SLE} curves.
\newblock To appear in Ann. Prob., 2007.

\bibitem[Dub03]{dubedat:triangle}
Julien Dub{\'e}dat.
\newblock S{LE} and triangles.
\newblock {\em Electron. Comm. Probab.}, 8:28--42 (electronic), 2003.

\bibitem[Law99]{lawler:bolyai}
Gregory~F. Lawler.
\newblock Geometric and fractal properties of {B}rownian motion and random walk
  paths in two and three dimensions.
\newblock In {\em Random Walks (Budapest, 1998)}, volume~9 of {\em Bolyai Soc.
  Math. Stud.}, pages 219--258. J\'anos Bolyai Math. Soc., Budapest, 1999.

\bibitem[Law05]{lawler:book}
Gregory~F. Lawler.
\newblock {\em Conformally Invariant Processes in the Plane}, volume 114 of
  {\em Mathematical Surveys and Monographs}.
\newblock American Mathematical Society, Providence, RI, 2005.

\bibitem[Law07]{lawler:curvedim}
Gregory~F. Lawler.
\newblock Dimension and natural parameterization for {SLE} curves.
\newblock {\em Preprint}, 2007.

\bibitem[RS05]{rohde_schramm}
Steffen Rohde and Oded Schramm.
\newblock Basic properties of {SLE}.
\newblock {\em Ann. of Math. (2)}, 161(2):883--924, 2005.

\bibitem[SZ07]{schramm_zhou:dimension}
Oded Schramm and Wang Zhou.
\newblock Boundary proximity of {SLE}.
\newblock arXiv:0711.3350v2 [math.PR], 2007.

\end{thebibliography}

\end{document}